\theoremstyle{plain}
\newtheorem{thm}{Theorem}[section]
\newtheorem{cor}[thm]{Corollary}
\newtheorem{lem}[thm]{Lemma}
\newtheorem{prop}[thm]{Proposition}
\theoremstyle{definition}
\newtheorem{defn}[thm]{Definition}
\newtheorem{ex}[thm]{Example}
\newtheorem{question}[thm]{Question}
\newtheorem{rem}[thm]{Remark}
\numberwithin{equation}{section}
\def\sband{A_{\mathrm{sph}}}
\DeclareMathOperator{\supp}{supp}
\DeclareMathOperator{\Real}{Re}
\DeclareMathOperator{\Hess}{Hess}
\begin{document}
\title{Payne--Philippin's overdetermined problems on compact surfaces}

\author{Hang Chen}
\address[Hang Chen]{School of Mathematics and Statistics, Northwestern Polytechnical University, Xi' an 710129, P. R. China \\ email: chenhang86@nwpu.edu.cn}
\thanks{Chen is supported by NSFC Grant No.~12571054 and Natural Science Foundation of Shaanxi Province Grant No.~2024JC-YBMS-011}
\author{Bohan Wu}
\address[Bohan Wu]{School of Mathematics and Statistics, Northwestern Polytechnical University, Xi' an 710129, P. R. China \\ email: mod@mail.nwpu.edu.cn}

\begin{abstract}
	We investigate the overdetermined problem given by
	\begin{equation*}
	\Delta u=0 \text{ in } \Omega,\quad  \frac{\partial u}{\partial\nu} =\sigma_1 u  \text{ on } \partial \Omega, \quad |\nabla u|=\text{constant on } \partial \Omega,
	\end{equation*}
	where $\Omega$ is a connected compact Riemannian surface with smooth boundary $\partial \Omega$,
	and $\sigma_1$ is the first nonzero Steklov eigenvalue of $\Omega$.
	We prove that this overdetermined problem admits a nontrivial solution if and only if $\Omega$ is $\sigma$-homothetic to either the flat unit disk or a flat cylinder $[-T,T]\times S^1$ for some $T\ge T_1$.
	This gives a complete answer to the question raised by Payne and Philippin in [Z.~Angew.~Math.~Phys. \textbf{42}(6), 864--873, 1991] for $\sigma=\sigma_1$ and arbitrary surfaces.
	In particular, we completely characterize compact domains in 2-dimensional space forms for which the overdetermined problem is solvable.
\end{abstract}

\keywords{Steklov eigenvalues and eigenfunctions; overdetermined problems; compact surfaces; harmonic functions.}
\subjclass[2020]{35N25, 58J32, 53C18, 58C40}

\maketitle

\section{Introduction}\label{sect-intro}
Finding solutions to partial differential equations under prescribed boundary conditions is a fundamental and important topic in analysis.
Overdetermined problems, which impose more boundary conditions than typically required for well-posedness, often lead to symmetry conclusions.
A celebrated result is Serrin's theorem \cite{Ser71},
which (in a special case) states that, for a domain $\Omega\subset \mathbb{R}^n$, the solvability of the system
\begin{equation*}
	\left\{\begin{aligned}
		\Delta u&=-1 && \text{ in } \Omega,\\
		u&=0 && \text{ on } \partial\Omega,\\
		\partial_{\nu}u&=\text{constant} && \text{ on } \partial\Omega
	\end{aligned}\right.
\end{equation*}
implies that $\Omega$ is a ball and $u$ is radially symmetric.
Since Serrin's work, numerous extensions have been obtained.
For instance, Kumaresan and Prajapat \cite{KP98} generalized the result to the hemisphere and the hyperbolic space;
Xia and his collaborators \cites{GX19,JLXZ24} studied so-called partially overdetermined problem for domains of manifolds with boundary;
Gao, Ma and Yang \cite{GMY23} considered overdetermined problems for fully nonlinear equations in space forms.

This paper is devoted to overdetermined Steklov eigenvalue problems.
Let $(M^n, g)$ be an $n$-dimensional complete Riemannian manifold, $n\ge 2$.
For a bounded domain $\Omega\subset M$ with smooth boundary $\partial \Omega$, the Steklov problem is to find real numbers $\sigma$ such that there exists a nonzero function $u$ on $\Omega$ satisfying
	\begin{empheq}[left=\empheqlbrace]{alignat=3}
		\Delta u&=0  & & \text{ in } \Omega,\label{eq-s1}\\
	\partial_{\nu} u &=\sigma u  \quad & & \text{ on } \partial \Omega,\label{eq-s2}
	\end{empheq}
where $\nu$ is the outward unit normal to $\partial \Omega$.
We call such a $\sigma$ a Steklov eigenvalue,
and $u$ the corresponding Steklov eigenfunction.
It is well known that Steklov eigenvalues are discrete and satisfy
\begin{equation*}
	0=\sigma_0<\sigma_1\le \sigma_2\le \cdots \to +\infty.
\end{equation*}

Over the past decades, the Steklov spectrum has been intensively studied.
Various results are obtained, such as estimates of the bounds, comparison theorems and isoperimetric inequalities; see \cites{FS16, Xio18, GLW25} for instance.
We refer the reader to \cite{CGGS24} for a recent survey which includes this topic.
However, to the authors’ knowledge, few results address overdetermined Steklov eigenvalue problems.
Payne and Philippin \cite{PP91} first imposed the following additional condition
	\begin{equation}
	|\nabla u| =c \text{ on } \partial \Omega\label{eq-s3}
	\end{equation}
and raised the following questions:
\begin{question}\label{ques-1}
	Is there a domain $\Omega$ for which the overdetermined problem has a solution? If so for which eigenfunctions is it achieved? Is the domain $\Omega$ unique and if so what is this domain?
\end{question}
In the same paper, they studied the above overdetermined problem for planar domains and proved the following theorem.
\begin{thm}[\cite{PP91}*{Theorem 1}]\label{thm:PP1}
	Let $\Omega\subset \mathbb{R}^2$ be a simply connected domain with smooth boundary $\partial \Omega$.
	Then the overdetermined problem \eqref{eq-s1}, \eqref{eq-s2} and \eqref{eq-s3} with $\sigma=\sigma_1$ is solvable
	if and only if $\Omega$ is a round disk.
\end{thm}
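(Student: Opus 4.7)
The ``if'' direction is immediate: on a Euclidean disk $B_R$, the function $u(x,y) = ax + by$ is harmonic, satisfies $\partial_\nu u = u/R$ on $\partial B_R$ (so $\sigma_1 = 1/R$), and has $|\nabla u|^2 = a^2 + b^2$ constant.

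For the converse, the simply connected planar setting suggests a complex-analytic approach. The plan is to pick a harmonic conjugate $v$ of $u$ so that $f := u + iv$ is holomorphic on $\Omega$ with $|f'|^2 = |\nabla u|^2$. The Cauchy--Riemann equations give $v_\tau = u_\nu$ on $\partial\Omega$, where $\tau$ is the counter-clockwise unit tangent. Combining this with the Steklov condition $u_\nu = \sigma_1 u$ and the identity $|\nabla u|^2 = u_\tau^2 + u_\nu^2 = c^2$ on $\partial\Omega$ produces the first-order ODE
\[
\left(\frac{du}{dt}\right)^2 + \sigma_1^2\, u^2 = c^2,
\]
where $t$ is arc length along $\partial\Omega$. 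Its smooth closed-loop solutions take the form $u(t) = (c/\sigma_1)\sin(\sigma_1(t-t_0))$, and periodicity forces $\sigma_1 L = 2\pi k$ for a positive integer $k$, with $L := |\partial\Omega|$. Integrating $v_\tau = \sigma_1 u$ then yields $v(t) = -(c/\sigma_1)\cos(\sigma_1(t-t_0)) + C$, so that $f$ wraps $\partial\Omega$ exactly $k$ times around the circle $\{w : |w - iC| = c/\sigma_1\}$.

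The crucial claim is $k = 1$, and this is where the hypothesis that $\sigma_1$ is the \emph{first} nontrivial Steklov eigenvalue enters. A Courant-type nodal theorem for Steklov eigenfunctions on planar domains (classical, going back to Kuttler--Sigillito) forces $u$ to have exactly two nodal domains in $\Omega$, hence exactly two sign changes on $\partial\Omega$; but the formula above exhibits $2k$ sign changes, so $k$ must equal $1$. I expect this nodal step to be the main obstacle of the argument; the remaining steps are direct manipulations with the Cauchy--Riemann equations and one application of the argument principle.

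Once $k = 1$, the argument principle promotes $f$ to a conformal bijection from $\Omega$ onto the disk of radius $c/\sigma_1$ centered at $iC$. In particular $f'$ has no zeros in $\Omega$, so $\log|f'|$ is harmonic in $\Omega$ with constant boundary value $\log c$; the maximum principle then forces $\log|f'| \equiv \log c$ throughout $\Omega$. A holomorphic function of constant modulus on a connected open set is constant, so $f'$ is a constant of modulus $c$, $f$ is affine, and $\Omega$ is the preimage under an affine map of a disk, hence a Euclidean disk of radius $1/\sigma_1$.
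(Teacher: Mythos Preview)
Your argument is correct but takes a genuinely different route from the paper. The paper derives Theorem~\ref{thm:PP1} as the planar case of its Theorem~\ref{thm-sim-conn}: with $F=\log|\nabla u|^2$ it shows $\tfrac12\Delta F=K$ in $\Omega$ (Lemma~\ref{lem-1}) and $\tfrac12\,\partial_\nu F=\sigma_1-\kappa$ on $\partial\Omega$ (Proposition~\ref{prop-2}), integrates via Stokes and Gauss--Bonnet to obtain $\sigma_1 L(\partial\Omega)=2\pi$, and then invokes the equality case of the Weinstock inequality. You instead reach $\sigma_1 L=2\pi k$ from an explicit boundary ODE, force $k=1$ with the Courant nodal theorem, and finish with the argument principle and the maximum principle on $\log|f'|$. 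Your route is self-contained in the plane---it never calls on Weinstock and in effect reproves its equality characterization by hand---but it is tied to $\mathbb{R}^2$: harmonic conjugates, the argument principle, and ``holomorphic of constant modulus is constant'' have no direct analogue on a curved surface, which is precisely the generality the paper is after. There is a nice point of contact: your observation that $\log|f'|=\tfrac12\log|\nabla u|^2$ is harmonic is exactly the $K=0$ instance of the paper's Lemma~\ref{lem-1}, so the two proofs share a common core even though the surrounding machinery differs. One small item to firm up: the step from ``two nodal domains in $\Omega$'' to ``two sign changes on $\partial\Omega$'' tacitly uses that the nodal set of $u$ is an embedded $1$-manifold with no closed components, which requires $\nabla u\neq 0$ in $\Omega$; this is Lemma~\ref{lem-2.3} and should be cited explicitly.
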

We remark that the constant $c$ in \eqref{eq-s3} must be positive when $u$ is an eigenfunction corresponding to a nonzero Steklov eigenvalue.
Otherwise, $u\equiv 0$ on $\partial\Omega$ by \eqref{eq-s2}, and then $u\equiv 0$ in $\Omega$ by \eqref{eq-s1}, a contradiction.

A natural question is whether Theorem~\ref{thm:PP1} can be generalized.
In \cite{PP91}, Payne and Philippin conjectured that Theorem~\ref{thm:PP1} holds for $\sigma_k (k\ge 2)$ and $\Omega\subset\mathbb{R}^n (n\ge 3)$.
However, this conjecture turned out to be false.
For planar domains, Alessandrini and Magnanini \cite{AM94} established a relation between a solution $u$ and the associated holomorphic function $F$ (called ``\emph{complex potential}'').
By analyzing the zeros of $F$ in $\Omega$, they
constructed various non-symmetric \emph{solvable domains} (cf.~Definition~\ref{def-sol-dom}).
For $\Omega\subset\mathbb{R}^n (n\ge 3)$,
Alessandrini and Magnanini \cite{AM96} connected solutions to isoparametric functions and isoparametric hypersurfaces, and proved an analogue of Theorem~\ref{thm:PP1} under additional conditions.

Very recently, Lee and Seo \cite{LS25} studied the surface case.
Observing the condition \eqref{eq-s3} is equivalent to
\begin{equation*}
	\Hess u(\nabla u) =\psi \nu  \text{ for some $\psi\in C^\infty(\partial\Omega) $ on } \partial \Omega,
\end{equation*}
they established
\begin{thm}[\cite{LS25}*{Theorem 2.3 and Corollary 2.4}]\label{thm:LS}
	Let $\Omega$ be a connected compact surface with $C^2$ boundary.
	Assume that the Gaussian curvature $K_\Omega>0$ and $\psi\le 0$.
	If the overdetermined problem \eqref{eq-s1}, \eqref{eq-s2} and \eqref{eq-s3} with $\sigma=\sigma_1$ admits a nontrivial solution,
	then $\Omega$ is flat and $\partial\Omega$ is the
	disjoint union of geodesics and geodesic circles of radius $1/\sigma_1$.
	In particular, if $\Omega$ is simply connected, then $\Omega$ is a flat disk.
\end{thm}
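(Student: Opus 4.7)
My plan is to combine the Bochner identity with the strong maximum principle and the Hopf boundary lemma. Since $u$ is harmonic on a Riemannian surface, one has
\[
\tfrac{1}{2}\Delta|\nabla u|^2 \;=\; |\Hess u|^2 + K_\Omega|\nabla u|^2,
\]
which is non-negative under the curvature hypothesis, so $|\nabla u|^2$ is subharmonic with boundary value $c^2$ and hence $|\nabla u|^2 \le c^2$ on $\Omega$. The strong Hopf lemma then gives a dichotomy: either $\partial_\nu|\nabla u|^2 > 0$ at every point of $\partial\Omega$, or $|\nabla u|^2 \equiv c^2$ on $\Omega$. Through \eqref{eq-s3'} the first alternative reads $\psi u > 0$ on $\partial\Omega$, which together with $\psi \le 0$ forces $u < 0$ on all of $\partial\Omega$; this contradicts $\int_{\partial\Omega} u\,ds = 0$, the $L^2$-orthogonality of the $\sigma_1$-eigenfunction to constants. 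So $|\nabla u|^2 \equiv c^2$ on $\Omega$; feeding this back into Bochner gives $|\Hess u|^2 + K_\Omega c^2 \equiv 0$, and as $c > 0$ this forces $K_\Omega \equiv 0$ and $\Hess u \equiv 0$. Thus $\Omega$ is flat and, in local Euclidean coordinates, $u = \vec a \cdot \vec x + b$ is affine.

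Next I would perform a boundary analysis. On each component of $\partial\Omega$, the two boundary conditions $u_\nu = \sigma_1 u$ and $u_\nu^2 + u_\tau^2 = c^2$ combine to give $\sigma_1^2 u^2 + u_\tau^2 = c^2$; differentiating in arclength yields $u_\tau(u_{\tau\tau} + \sigma_1^2 u) = 0$. Each component thus falls into one of two cases: (A) $u_\tau \equiv 0$, so $u$ is constant on it, or (B) $u = A\cos(\sigma_1 s + \varphi)$. In Case (A), the component lies along a level set of the affine function $u$, so its tangent is perpendicular to the constant vector $\vec a$ and is itself constant, giving $\kappa_g \equiv 0$: a closed geodesic. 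In Case (B), using $\vec x''(s) = -\kappa_g \nu$ and $u_\nu = \vec a \cdot \nu = \sigma_1 u$, one computes $u_{\tau\tau} = \vec a \cdot \vec x''(s) = -\kappa_g \sigma_1 u$, which compared with $u_{\tau\tau} = -\sigma_1^2 u$ gives $\kappa_g \equiv \sigma_1$ away from the zeros of $u$, hence everywhere on the component by continuity: a geodesic circle of radius $1/\sigma_1$.

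The simply connected ``in particular'' statement is then immediate: $\partial\Omega$ has a single component, and Case (A) is excluded because a function constant on the entire boundary of a simply connected domain must be constant throughout by harmonicity, contradicting $u_\nu = \sigma_1 u \not\equiv 0$. Hence $\partial\Omega$ is a geodesic circle of radius $1/\sigma_1$ enclosing the flat disk $\Omega$. The main obstacle in the plan is the rigidity step that promotes the Hopf boundary inequality to the global identity $|\nabla u|^2 \equiv c^2$: both sign hypotheses, the Hopf dichotomy, and the mean-zero condition on the boundary must cooperate precisely, and weakening any of them opens the door to genuine counterexamples such as the flat cylinder described in the abstract.
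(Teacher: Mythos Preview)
The paper does not give its own proof of Theorem~\ref{thm:LS}; the result is quoted from \cite{LS25}, so there is no in-paper argument to compare against directly. Your proposal is a correct reconstruction of what is presumably the original strategy: Bochner subharmonicity of $|\nabla u|^2$, the Hopf boundary dichotomy, and the contradiction via $\psi\le 0$ and $\int_{\partial\Omega}u=0$ together force $|\nabla u|^2\equiv c^2$, whence $K\equiv 0$ and $\Hess u\equiv 0$. (The hypothesis should presumably read $K_\Omega\ge 0$ rather than $K_\Omega>0$, else the conclusion ``$\Omega$ is flat'' is vacuous; you have silently made this correction.) One small point to tighten: the per-component dichotomy ``either $u_\tau\equiv 0$ or $u=A\cos(\sigma_1 s+\varphi)$'' does not follow immediately from the pointwise relation $u_\tau(u_{\tau\tau}+\sigma_1^2 u)=0$. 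You should rule out a mixed component by noting that on any subinterval with $u_\tau\equiv 0$ your Case~(A) computation gives $\kappa_g=0$, while on the adjacent set $\{u_\tau\ne 0\}$ your Case~(B) computation gives $\kappa_g=\sigma_1$, contradicting continuity of $\kappa_g$.

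It is worth contrasting your approach with how the paper proves its own, stronger results (Theorems~\ref{thm-sim-conn} and~\ref{thm-multi-conn}). Rather than relying on subharmonicity of $|\nabla u|^2$ under a curvature sign, the paper (via Lemma~\ref{lem-1}, itself taken from \cite{LS25}) uses the exact identity $\tfrac12\Delta\log|\nabla u|^2=K$, valid with no sign assumption, computes the boundary normal derivative (Propositions~\ref{prop-2} and~\ref{prop-3}), and closes with Gauss--Bonnet and the Weinstock inequality. That integral route dispenses with both the hypothesis $K_\Omega\ge 0$ and the hypothesis $\psi\le 0$, which is exactly what lets the paper supersede Theorem~\ref{thm:LS}. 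Your maximum-principle argument, by contrast, genuinely needs both sign conditions: drop $K\ge 0$ and $|\nabla u|^2$ is no longer subharmonic; drop $\psi\le 0$ and the Hopf alternative cannot be excluded.
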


We study above overdetermined problem for arbitrary compact Riemannian surfaces, dropping both the simply connected condition in Theorem~\ref{thm:PP1} and the restrictions on $K_\Omega$ and  $\psi$ in Theorem~\ref{thm:LS}.
Due to the examples constructed by Alessandrini and Magnanini \cite{AM94}*{Theorem 1.2},
a complete classification of solvable surfaces seems elusive;
therefore, we focus on the case $\sigma=\sigma_1$, and provide a further discussion for general $\sigma$.

For convenience, throughout this paper, all the surfaces (or domains) and there boundaries are assumed to be smooth.
We introduce some definitions and notations.

\begin{defn}\label{def-sol-dom}
	A surface (or a domain) $\Omega$ is called \emph{solvable} for an overdetermined problem if it admits a nontrivial solution of this overdetermined problem.
	Otherwise, we say that $\Omega$ is \emph{unsolvable} for this overdetermined problem.
\end{defn}

For convenience, we refer to the overdetermined Steklov problem \eqref{eq-s1}--\eqref{eq-s3} with $\sigma=\sigma_1$ as (PP-I),
i.e.,
\begin{equation}\label{PP-I}
	\left\{
	\begin{aligned}
		\Delta u&=0 && \text{ in } \Omega,\\
		\partial_{\nu}u&=\sigma_1 u && \text{ on } \partial\Omega,\\
		|\nabla u|&=c \text{ a constant} && \text{ on } \partial\Omega.
	\end{aligned}\right.\tag{PP-I}
\end{equation}

Under conditions on the Steklov spectrum,
as pointed out in \cite{FS16},
one can only hope to classify surfaces up to the  equivalence relation called ``$\sigma$-homothetic'' (cf.~Lemma~\ref{lem-conf} for details). This terminology was introduced by Fraser and Schoen.
\begin{defn}[cf.~\cite{FS16}*{Definition 2.1}]
	For two surfaces $(\Omega_1,g_1)$ and $(\Omega_2,g_2)$,  we say that $\Omega_1$ and $\Omega_2$ are \emph{$\sigma$-homothetic} if there is a conformal diffeomorphism $\Phi \colon \Omega_1\to \Omega_2$ such that the pullback metric $\Phi^{\ast}g_2 = \rho^2 g_1$ with $\rho|_{\partial \Omega_1} = c$ for a constant $c$. Such $\Phi$ is called a \emph{$\sigma$-homothety} (from $\Omega_1$ to $\Omega_2$).
\end{defn}

\noindent
\textbf{Notations:}
\begin{itemize}
	\item $\mathbb{M}^2_K$: the 2-dimensional simply connected space form of constant Gaussian curvature $K$, i.e., $\mathbb{M}^2_K=\mathbb{S}^2$, $\mathbb{R}^2$ and $\mathbb{H}^2$ when $K=1,0$ and $-1$, respectively.
	\item $\mathbb{D}$: the unit disk of $\mathbb{R}^2$.
	\item $\mathbb{A}_T=[-T,T]\times S^1$: a cylinder with  the flat metric $\mathrm{d}t^2+\mathrm{d}\theta ^2$, where $L>0$ and $S^1$ is the unit circle.
	\item $T_1(\approx 1.2)$: the unique positive solution of the equation $1/x=\tanh x$.
	\item $\sband(R)$: the spherical zone in $ \mathbb{S}^2$ which is bounded by two parallel latitude circles of the same radius $\cos R$, which is symmetric with respectively to the equator,
	i.e.,
	\begin{equation*}
		\sband(R)=\{(\cos r \cos\theta, \cos r \sin\theta, \sin r)\mid -R\le r\le R, \theta \in S^1\},
	\end{equation*}
	where $0<R< \pi/2$.
	\item $R_1 = 2 \arctan \tanh (T_1/2)\approx 0.99 \approx 56.7^{\circ}$.
	\item $(\Omega_1,g_1)\sim (\Omega_2,g_2)$: $\Omega_1$ and $\Omega_2$ are $\sigma$-homothetic.
\end{itemize}

We now state our main result, which provides a complete answer to Question \ref{ques-1} for $\sigma=\sigma_1$ and any compact surfaces with smooth metrics.

\begin{thm}\label{thm-PP-I}
	Let $(\Omega,g_\Omega)$ be a connected compact surface with boundary $\partial \Omega$.
	Then $\Omega$ is solvable for \eqref{PP-I} if and only if $\Omega$ is $\sigma$-homothetic to either $ \mathbb{D}$ or $\mathbb{A}_T$ for some $T\ge T_1$.

	Furthermore, the solution set $\mathcal{S}$ of \eqref{PP-I} is a linear space and can be characterized as follows:

	(1) For $(\Omega,g_\Omega)\sim \mathbb{D}$, let $\Phi\colon\Omega \to \mathbb{D}$ be a $\sigma$-homothety.
	Then $\mathcal{S}$ coincides with the first Steklov eigenspace of $\Omega$, spanned by $\Phi^{\ast}(r\cos \theta)$ and $\Phi^{\ast}(r\sin \theta)$.

	(2) For $(\Omega,g_\Omega)\sim \mathbb{A}_T, T\ge T_1$, let $\Phi \colon \Omega \to \mathbb{A}_T$ be a $\sigma$-homothety.
	Then $\mathcal{S}$ is the subspace of the first Steklov eigenspace of $\Omega$ spanned by $\Phi^{\ast}t$.
\end{thm}

\begin{rem}
	(1) In this paper, we always treat the constant $c$ in \eqref{PP-I} as non-fixed; that is, we only require that $|\nabla u|$ be some constant on the boundary. If $c$ is prescribed, the solution is determined up to scaling of any nontrivial $u\in\mathcal{S}$.

	(2) The second part of Theorem~\ref{thm-PP-I} follows directly from Example~\ref{ex-disk} and Example~\ref{ex-annulus}.
	Furthermore, we see that, if $\mathcal{S}$ coincides with the first Steklov eigenspace of $\Omega$,
	then the critical cylinder $\mathbb{A}_{T_1}$ must be excluded.

	(3) It is worth noting that the critical cylinder $\mathbb{A}_{T_1}$ is $\sigma$-homothetic to
	a so-called ``critical catenoid'' in \cite{FS11}, which is of geometric interest since it is a free boundary solution in the ball,
	and it is the unique maximizer (up to $\sigma$-homothety) of $\sigma_1(\Omega)L(\partial\Omega)$ among all smooth annuli.
	The readers may refer to \cites{FS11, FS16} for details.
	We may call $\mathbb{A}_{T}$ a supercritical cylinder when $T\ge T_1$.
\end{rem}

We will separate Theorem~\ref{thm-PP-I} into two parts: the simply connected case Theorem~\ref{thm-sim-conn} and the multiply connected case Theorem~\ref{thm-multi-conn}.
Furthermore,
applying this theorem to domains of $\mathbb{M}_K^2$, we prove the following results (cf.~Corollaries~\ref{cor-sim-conn}, \ref{cor-multi-conn-euc-hyp} and \ref{cor-multi-conn-sph}).
\begin{thm}\label{thm-euc-hyp}
	A connected compact domain $\Omega\subset\mathbb{R}^2$ or $\Omega\subset\mathbb{H}^2$ with boundary $\partial \Omega$
	is solvable for \eqref{PP-I} if and only if $\Omega$ is a geodesic disk.
\end{thm}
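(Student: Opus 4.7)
The plan is to deduce Theorem~\ref{thm-euc-hyp} from the classification Theorem~\ref{thm-OS-I} by determining which representatives of the conformal classes $\mathbb{D}$ and $\mathbb{A}_L$ can be realized as a domain of $\mathbb{R}^2$ or $\mathbb{H}^2$ under the rigid boundary-isometry relation $\sim$. For sufficiency, I will check that on any geodesic disk of $\mathbb{R}^2$ or $\mathbb{H}^2$, geodesic polar coordinates $(r,\theta)$ separate the Steklov problem and produce first eigenfunctions of the form $\phi(r)\cos\theta$ and $\phi(r)\sin\theta$, whose gradient norm depends only on $r$ and is thus constant on the boundary circle; hence geodesic disks are solvable.

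For necessity, suppose $\Omega\subset\mathbb{M}_K^2$ with $K\in\{0,-1\}$ solves \eqref{OS-I}. By Theorem~\ref{thm-OS-I}, either $(\Omega,g_\Omega)\sim\mathbb{D}$ or $(\Omega,g_\Omega)\sim\mathbb{A}_L$ for some $L\ge L_0$. I fix a conformal map $f$ onto the model and write $f^{\ast} g_{\mathrm{model}}=e^{2\varphi}g_\Omega$; the hypothesis $\sim$ forces $\varphi$ to be a single constant on $\partial\Omega$ (the rescaling factor is uniform across components), and the Gauss-curvature transformation law combined with the flatness of both models yields $\Delta_{g_\Omega}\varphi=K$ in $\Omega$. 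When $K=0$, $\varphi$ is harmonic and constant on the entire $\partial\Omega$, hence a global constant by the maximum principle, so $f$ is a similarity; in the case $\Omega\sim\mathbb{D}$ this identifies $\Omega$ as a Euclidean round disk, while in the case $\Omega\sim\mathbb{A}_L$ it would make $\mathbb{A}_L$ isometric (up to scale) to $\Omega$ --- but the core circle of $\mathbb{A}_L$ is a closed geodesic, and no bounded flat planar annular domain admits one, a contradiction. When $K=-1$ and $\Omega\sim\mathbb{D}$, I will embed $\Omega$ in the Poincar\'e disk, realize $f$ as a biholomorphism onto the flat unit disk, and translate the boundary isometry into $|f'|(1-|z|^2)\equiv\text{const}$ on $\partial\Omega$; combined with the identity $\Delta_{\mathrm{hyp}}\log(1-|z|^2)=-1$ and the harmonicity of $\log|f'|$, this recasts the condition as a Serrin-type overdetermined problem for the hyperbolic torsion function on $\Omega$, from which a moving-plane reflection argument adapted to $\mathbb{H}^2$ will force $\Omega$ to be rotationally symmetric, hence a geodesic disk.

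The main obstacle is the remaining case $K=-1$, $\Omega\sim\mathbb{A}_L$. Here the plan is to pull back the linear eigenfunction $t$ of $\mathbb{A}_L$ to a harmonic $u$ on $\Omega$ that is constant ($=\pm L$) on each boundary component with $|\nabla u|$ constant on $\partial\Omega$, then study the $P$-function $P=|\nabla u|^2+u^2$, which is subharmonic by the Bochner formula in curvature $-1$. Combining Hopf's inequality for $P$ at $\partial\Omega$ with Gauss--Bonnet on the annular $\Omega$ and the overdetermined boundary data should yield a contradiction with the embedding $\Omega\hookrightarrow\mathbb{H}^2$. The difficulty is that $\varphi$ is no longer harmonic, the naive Gauss--Bonnet identity is automatically consistent, and no similarity argument applies; the contradiction must exploit the global embedding in $\mathbb{H}^2$ essentially, for instance by applying the overdetermined rigidity separately to each of the two topological disks in $\mathbb{H}^2$ cut out by the boundary components of $\Omega$ and comparing.
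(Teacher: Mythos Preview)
Your $K=0$ arguments are correct and essentially match the paper. The gaps are in both $K=-1$ cases. For $\Omega\sim\mathbb{D}$, the Serrin reduction fails: you correctly obtain $\Delta_{g_\Omega}\varphi=-1$ with $\varphi|_{\partial\Omega}=c$, so $\varphi-c$ is the hyperbolic torsion function, but this is a \emph{well-posed} Dirichlet problem, not an overdetermined one. Every simply connected $\Omega\subset\mathbb{H}^2$ admits such a $\varphi$, and you have no Neumann condition $\partial_\nu\varphi=\text{const}$ to feed into Kumaresan--Prajapat; the boundary-isometry hypothesis has already been spent in making $\varphi|_{\partial\Omega}$ constant, and nothing remains to overdetermine the problem. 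The paper disposes of this case by invoking the equality characterization in the Weinstock inequality for domains of space forms.

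For $\Omega\sim\mathbb{A}_L$ with $K=-1$, the idea you are missing is to reverse the direction of the conformal-factor PDE. Writing $f^*g_{\text{model}}=e^{2\varphi}g_\Omega$ gives the linear equation $\Delta_{g_\Omega}\varphi=-1$ on the \emph{unknown} domain, which carries no rigidity. The paper instead writes $g_\Omega=e^{2w}g_0$ on the \emph{fixed model} $\mathbb{A}_L$, obtaining $\Delta_0 w=e^{2w}$ with $w|_{\partial\mathbb{A}_L}$ constant. This semilinear equation has increasing nonlinearity, so the maximum principle gives uniqueness; the solution therefore inherits the symmetries of the cylinder and of the boundary data, i.e.\ $w=w(t)$ and $w(t)=w(-t)$. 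Hence each boundary component of $\Omega$ has constant geodesic curvature, and since $L(\Gamma_1)=L(\Gamma_2)$ both are geodesic circles of the same radius in $\mathbb{H}^2$---but two such circles cannot be nested to bound a compact annulus. No $P$-function, Hopf lemma, or auxiliary-disk argument is needed.
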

\begin{thm}
	A connected domain $\Omega\subset\mathbb{S}^2$ with boundary $\partial \Omega$
	is solvable for \eqref{PP-I} if and only if $\Omega$ is either a spherical cap or $\sigma$-homothetic to
	$\sband(R)$ for some $R\in [R_1,\pi/2)$.
	In the latter case, $\Omega$ is symmetric with respect to a certain great circle of $\mathbb{S}^2$.
\end{thm}
\begin{cor}
	A connected domain $\Omega\subset\mathbb{S}^2_{+}$ with boundary $\partial \Omega$
	is solvable for \eqref{PP-I} if and only if $\Omega$ is a spherical cap.
\end{cor}

\smallskip

The paper is organized as follows.

In Sect.~\ref{Sect:Pre}, we recall some fundamental computations under a special local coordinate system in Riemannian geometry and basic properties of Steklov eigenfunctions.

In Sect.~\ref{Sect:simply}, we solve the case of the simply connected surfaces,
where Proposition~\ref{prop-2} plays a crucial role.
Regrading an auxiliary function, finer and novel calculations and analyses are done.
This ultimately allows us to employ the Gauss--Bonnet formula and the Weinstock inequality and prove Theorem~\ref{thm-sim-conn}.

Theorem~\ref{thm:PP1} can be derived by Theorem~\ref{thm-sim-conn} (cf.~Corollary~\ref{cor-sim-conn}).
This means that, even for the simply connected domains,
we give an alternative proof of Theorem~\ref{thm:PP1}, which is more geometric in flavor, whereas the original one is more analytic.

Next, we deal with the case of the multiply connected surfaces in Sect.~\ref{Sect:multiply}.
We firstly check the solvability of an annulus (Example~\ref{ex-annulus}), where the key Proposition~\ref{prop-2} fails.
We seek a modified version Proposition~\ref{prop-3} instead of Proposition~\ref{prop-2}, then the Gauss--Bonnet formula can be applied again and help us determine the genus and the number of the boundary components.
Furthermore, we show that boundary components are of equal length, which is key to prove the restriction of the  conformal transformation to the boundary is an isometry.

As an application, we investigate which multiply connected domains in a space forms are solvable for \eqref{PP-I}.
The results are obtained by discussing the existence and uniqueness of solutions of the second-order elliptic equation satisfied by the conformal factor.
Corollary~\ref{cor-multi-conn-euc-hyp} (cf.~Theorem~\ref{thm-euc-hyp}) implies that the simply connected condition in Theorem~\ref{thm:PP1} is not necessary.

In Sect.~\ref{Sect:general-case},
we provide further discussions on several extensions.
First, by relaxing the condition that the boundary gradient magnitude be a global constant to being locally constant,
we establish a classification for this weaker overdetermined problem.
Second, for higher eigenvalues,
under the additional assumption that the overdetermined problem admits a solution without critical points,
we prove a classification result.
Finally, we derive a necessary condition for the overdetermined problem to be solvable in the general case.

In the last Sect.~\ref{Sect:proof-thmC},
we investigate another type of overdetermined condition, namely, that the mean value of each nonconstant Steklov eigenfunction over the domain vanishes.
We prove that, for any compact connected domain in the hemisphere or hyperbolic space of arbitrary dimension, this overdetermined condition holds if and only if the domain is a geodesic ball (Theorem~\ref{thm-C}),
where Serrin-type theorems for $\mathbb{S}^n$ and $\mathbb{H}^n$ is applied.
\section{Preliminaries}\label{Sect:Pre}

\subsection{Normal exponential maps}\label{Section:nep}
Let $(\Omega, g_\Omega)$  be a simply connected compact surface with smooth boundary $\partial\Omega$ of length $L$.
We parametrize $\partial \Omega$ by the arc-length as  $\gamma \colon [0,L]\to \partial \Omega, \gamma(0)=\gamma(L), \gamma'(0)=\gamma'(L), |\gamma'|\equiv 1$.
Points in $\Omega$ near the boundary can be parametrized by $[0,\epsilon)\times [0,L] \ni(x,y)\mapsto \exp_{\gamma(y)}(x\hat{\nu})$ for small $\epsilon>0$, where $\hat{\nu}=-\nu$ is the inward unit normal along the boundary.
Then the metric on $\Omega$ (near the boundary) can be written as
	\begin{equation*}
		g_{\Omega}=\mathrm{d}x^{2}+\lambda^{2}(x,y)\mathrm{d}y^{2},
	\end{equation*}
	where $\lambda(x,y)$ is a positive function satisfying $\lambda(0,y)\equiv 1$.

If we write $g_\Omega=g_{ij}\mathrm{d}v^i\mathrm{d}v^j$ with $(v^1,v^2)=(x,y)$, then the Christoffel symbols are given by
	\begin{gather*}
    \Gamma_{11}^{1}=\Gamma_{12}^{1}=\Gamma_{21}^{1}=\Gamma_{11}^{2}=0,\\
	\Gamma_{22}^{1}=-\lambda\lambda_{x},\quad\Gamma_{22}^{2}=\frac{\lambda_{y}}{\lambda},\quad\Gamma_{12}^{2}=\Gamma_{21}^{2}=\frac{\lambda_{x}}{\lambda}.
	\end{gather*}

	For any smooth function $\phi\in C^\infty(\Omega)$, under the local coordinates above, it is not hard to obtain
	\begin{gather}
		\nabla \phi =\phi_x\partial_x+\frac{1}{\lambda^{2}}\phi_y\partial_y,\label{eq2.1}\\
		|\nabla \phi|^2=\phi_x^2+\frac{1}{\lambda^{2}}\phi_y^2,\label{eq2.2}\\
		\Delta \phi =\phi_{xx}+\frac{1}{\lambda^{2}}\phi_{yy}+\frac{\lambda_x}{\lambda}\phi_x-\frac{\lambda_y}{\lambda^3}\phi_y.\label{eq2.3}
	\end{gather}

A direct computation shows that $\nabla_{\gamma'(y)}\gamma'(y)=(-\lambda_x(0,y))\hat{\nu}$.
Hence,
	\begin{equation}\label{eq-geo-cur}
		\kappa=-\lambda_x(0,y)
	\end{equation}
is the geodesic curvature $\kappa$ of $\partial\Omega$ (with respect to the inward normal $\hat{\nu}$).

\subsection{Basic properties of Steklov eigenfunctions}
Although the following result is well-known, we give a quick proof.
\begin{lem}\label{lem-2.2}
Assume that $\partial\Omega$ has only one connected component.
Then no Steklov eigenfunction of $\Omega$ corresponding to the eigenvalue $\sigma_k (k\ge 1)$ is constant along $\partial\Omega$.
\end{lem}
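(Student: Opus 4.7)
The plan is to argue by contradiction, using the Steklov boundary condition together with the divergence theorem. Suppose $u$ is a Steklov eigenfunction corresponding to $\sigma_k$ with $k\ge 1$ (so $\sigma_k>0$), and assume for contradiction that $u\equiv c$ for some constant $c$ along the single boundary component $\partial\Omega$.

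The boundary condition \eqref{eq-s2} then immediately forces $\partial u/\partial\nu=\sigma_k c$ on $\partial\Omega$. Integrating the harmonic equation \eqref{eq-s1} and applying the divergence theorem, I would obtain
\begin{equation*}
0=\int_{\Omega}\Delta u\,\mathrm{d}A=\int_{\partial\Omega}\frac{\partial u}{\partial\nu}\,\mathrm{d}s=\sigma_k c\cdot\mathrm{length}(\partial\Omega).
\end{equation*}
Since $\sigma_k>0$ and $\mathrm{length}(\partial\Omega)>0$, this forces $c=0$. Note that it is precisely here that the single-component hypothesis is used: if $\partial\Omega$ had several components on each of which $u$ was a different constant, the integral would give only one linear relation among the constants, not $c=0$.

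Having shown $u\equiv 0$ on $\partial\Omega$, I would invoke the maximum principle for the harmonic function $u$ on $\Omega$ (uniqueness for the Dirichlet problem) to conclude $u\equiv 0$ on $\Omega$, contradicting the assumption that $u$ is a nontrivial eigenfunction.

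I do not anticipate any real obstacle: this lemma is a clean consequence of the compatibility condition $\int_{\partial\Omega}\partial u/\partial\nu=0$ combined with Dirichlet uniqueness, and the connectedness of $\partial\Omega$ enters in exactly one, transparent place.
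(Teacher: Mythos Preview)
Your proof is correct and follows essentially the same route as the paper: both deduce $c=0$ from the divergence theorem applied to the harmonic equation, then conclude $u\equiv 0$. The only cosmetic difference is in the last step---the paper uses the energy identity $\sigma_k\int_{\partial\Omega}u^2=\int_\Omega|\nabla u|^2$ to force $\nabla u\equiv 0$, whereas you invoke Dirichlet uniqueness/the maximum principle; these are equivalent and equally standard.
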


\begin{proof}
	For any Steklov eigenfunction $u$ corresponding to  $\sigma_k (k\ge 1)$, $u$ is orthogonal to the constant function $u_0=1$ in the $L^2(\partial \Omega)$ sense.
	If $u|_{\partial\Omega}=c$ is constant, then
	\begin{equation*}
		0=\int_{\partial\Omega} u=cL(\partial\Omega),
	\end{equation*}
	which implies $c=0$. Hence, $u\equiv 0$ in $\Omega$ by the harmonicity of $u$, a contradiction.
\end{proof}

The following lemma allows us to construct an auxiliary function in the next sections.
\begin{lem}[cf.~\cite{PP91}*{Lemma 3} and \cite{LS25}*{Lemma 2.1}]\label{lem-2.3}
	Let $\Omega$ be a connected compact surface with $C^2$ boundary. Then any first Steklov eigenfunction has no critical point in $\Omega$.
\end{lem}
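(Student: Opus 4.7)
The plan is to argue by contradiction using the local normal form of a harmonic function at an interior critical point combined with a Courant-type nodal-domain bound for first Steklov eigenfunctions.

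Suppose, for contradiction, that a first Steklov eigenfunction $u$ has a critical point $p_0\in\operatorname{int}(\Omega)$, and set $c:=u(p_0)$. Since $u$ is harmonic and real analytic in the interior, I pick isothermal coordinates $z=x+iy$ centered at $p_0$ and write
\[
u-c=\Real(az^{m+1})+O(|z|^{m+2}),\qquad m\ge 1,\ a\in\mathbb{C}\setminus\{0\}.
\]
Thus the level set $\{u=c\}$ is locally a union of $2(m+1)\ge 4$ smooth arcs meeting at $p_0$ at equal angles, and the $2(m+1)$ sectors they carve out are alternately labelled by the sign of $u-c$. The proof now splits according to whether $c=0$ or not.

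If $c=0$, then $p_0$ is a $\ge 4$-valent crossing of the nodal set $N:=\{u=0\}$. Combining $\int_{\partial\Omega}u=0$ (from the proof of Lemma~\ref{lem-2.2}) with the Courant-type nodal-domain theorem for Steklov eigenvalues applied to $\sigma_1$, $u$ has exactly two nodal domains. On the other hand, by the strong maximum principle every nodal domain touches $\partial\Omega$, and viewing $N\cup\partial\Omega$ as an embedded graph in $\Omega$ whose open faces are the nodal domains, a short Euler-characteristic count shows that any interior $\ge 4$-valent crossing of $N$ forces at least three such faces---a contradiction.

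If $c\neq 0$, say $c>0$, the super- and sub-level sets $\{u>c\}$ and $\{u<c\}$ each split into at least $m+1\ge 2$ disjoint sectors at $p_0$, and by the strong maximum principle every global connected component of either set reaches $\partial\Omega$. Hence $u|_{\partial\Omega}-c$ has at least two disjoint arcs of positivity and two of negativity on $\partial\Omega$, i.e.\ at least four sign changes. A Courant-type count for the boundary trace (equivalently, for the Dirichlet-to-Neumann operator) then produces a nontrivial function orthogonal to both $u$ and the constants in $L^2(\partial\Omega)$ whose Rayleigh quotient is $\le\sigma_1$, contradicting that $\sigma_1$ is the first nontrivial Steklov eigenvalue.

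The main obstacle is the Courant-type count in the case $c\neq 0$, particularly in the multiply connected setting where $\partial\Omega$ may have several components and oscillation arguments have to be adapted component by component. An alternative route---essentially the one in \cite{PP91}---is available in the simply connected case: the harmonic conjugate makes $u=\Real f$ for a holomorphic $f$, so the critical points of $u$ coincide with the zeros of $f'$, and the argument principle on $\partial\Omega$ counts them against a boundary winding number that can be pinned down from the Steklov condition $\partial u/\partial\nu=\sigma_1 u$.
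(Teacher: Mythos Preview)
The paper does not supply its own proof of this lemma; it simply cites \cite{PP91}*{Lemma~3} and \cite{LS25}*{Lemma~2.1}. So there is nothing in the paper to compare against, and your proposal has to stand on its own.

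As written, it does not. The decisive gap is in the case $c\neq 0$: the passage from ``at least two local sectors of $\{u>c\}$ at $p_0$'' to ``at least two disjoint arcs where $u>c$ on $\partial\Omega$'' is unjustified. Distinct local sectors near $p_0$ may well lie in the \emph{same} global connected component of $\{u>c\}$ (this is exactly what can happen once the topology of $\Omega$ is nontrivial), and even distinct components need not hit $\partial\Omega$ in arcs that alternate with arcs where $u<c$. The ensuing ``Courant-type count for the boundary trace'' is only asserted: you would have to actually construct a trial function orthogonal to both $1$ and $u$ in $L^2(\partial\Omega)$ with Rayleigh quotient $\le\sigma_1$, and since $u-c$ is not a Steklov eigenfunction when $c\neq 0$ this is not the standard Courant argument. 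You concede these difficulties in your last paragraph, which means the proposal is a plan rather than a proof. There is also a smaller omission: you treat only $p_0\in\operatorname{int}(\Omega)$, but in the paper $\Omega$ is the closed compact surface and $f=\log|\nabla u|^2$ must be defined up to and including $\partial\Omega$, so boundary critical points must be excluded as well. The holomorphic-derivative/argument-principle route you sketch at the end is in fact the robust one; that is what the cited references carry out, and it handles the multiply connected case and the boundary simultaneously without splitting into the cases $c=0$ and $c\neq 0$.
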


We end this section with a statement on the conformal invariance of solvability.
\begin{lem}\label{lem-conf}
Let $(\Omega, g)$ be a connected compact Riemannian surface with boundary $\partial\Omega$,
For any conformal metric $\tilde{g}=\rho^2 g$,
if $\rho|_{\partial\Omega}$ is constant, then
$(\Omega,g)$ is solvable for \eqref{PP-I} if and only if $(\Omega,\tilde{g})$ is solvable for \eqref{PP-I}.

In other words, if $(\Omega_1,g_1)\sim (\Omega_2,g_1)$,
then the solvability of $\Omega_1$ and $\Omega_2$ for \eqref{PP-I} are the same.
\end{lem}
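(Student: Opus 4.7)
The plan is to exploit the fact that in dimension two, harmonic functions are conformally invariant, and then track how the two boundary conditions in \eqref{OS-I} transform under $g \mapsto \tilde g = \rho^2 g$.

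First I would fix notation: let $\tilde\Delta$, $\tilde\nabla$, $\tilde\nu$ denote the Laplacian, gradient, and outward unit normal with respect to $\tilde g$. The standard conformal-change formulas in dimension two give $\tilde\Delta u = \rho^{-2}\Delta u$, $|\tilde\nabla u|_{\tilde g} = \rho^{-1}|\nabla u|_g$, and $\tilde\nu = \rho^{-1}\nu$, so $\partial u/\partial\tilde\nu = \rho^{-1}\,\partial u/\partial\nu$. Since conformal maps between surfaces can be written as such a metric change on the source, it suffices to verify the equivalence in this intrinsic form.

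Next I would argue that a function $u$ on $\Omega$ is a Steklov eigenfunction with respect to $g$ with eigenvalue $\sigma$ if and only if it is a Steklov eigenfunction with respect to $\tilde g$ with eigenvalue $\sigma/c_0$, where $c_0 := \rho|_{\partial\Omega}$ (a positive constant by hypothesis). Indeed, $\Delta u = 0$ is equivalent to $\tilde\Delta u = 0$, and the boundary identity
\begin{equation*}
\frac{\partial u}{\partial\tilde\nu} = \rho^{-1}\frac{\partial u}{\partial\nu} = \frac{1}{c_0}\frac{\partial u}{\partial\nu}\quad\text{on } \partial\Omega
\end{equation*}
translates $\partial u/\partial\nu = \sigma u$ into $\partial u/\partial\tilde\nu = (\sigma/c_0) u$. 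This correspondence is a bijection on the Steklov spectrum that simply rescales every eigenvalue by the factor $1/c_0$; in particular it preserves the ordering, so $\tilde\sigma_k = \sigma_k/c_0$ for every $k$, and first eigenfunctions are matched with first eigenfunctions.

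Finally I would address the overdetermined condition. Using $|\tilde\nabla u|_{\tilde g} = \rho^{-1}|\nabla u|_g$ and evaluating on $\partial\Omega$ where $\rho \equiv c_0$, we see $|\nabla u|_g \equiv c$ on $\partial\Omega$ iff $|\tilde\nabla u|_{\tilde g} \equiv c/c_0$ on $\partial\Omega$. Combining the three bijections (harmonicity, Steklov boundary equation with $\sigma_1 \leftrightarrow \tilde\sigma_1$, constant gradient on the boundary), $u$ solves \eqref{OS-I} for $(\Omega,g)$ iff the same $u$ solves \eqref{OS-I} for $(\Omega,\tilde g)$. The second statement follows because the definition of $\sim$ allows a preliminary global rescaling of $g_1$, which does not affect solvability, followed by a conformal diffeomorphism that is an isometry on the boundary — i.e.\ exactly the situation just handled, with $\rho|_{\partial\Omega} \equiv 1$. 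There is no real obstacle here; the only point worth a line of justification is that the rescaling of the whole spectrum by $1/c_0$ preserves the label ``first nonzero eigenvalue,'' which is immediate.
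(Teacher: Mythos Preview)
Your argument is correct and matches the paper's own proof essentially line for line: both use the two-dimensional conformal invariance of the Laplacian together with the relations $\tilde\nu=\rho^{-1}\nu$ and $|\tilde\nabla u|_{\tilde g}=\rho^{-1}|\nabla u|_g$ to see that the entire Steklov spectrum rescales by $1/c_0$ with unchanged eigenfunctions, so all three conditions of \eqref{OS-I} transfer. Your extra sentence explaining why the label ``first nonzero eigenvalue'' is preserved is a welcome clarification that the paper leaves implicit.
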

\begin{proof}
	Assume $\rho|_{\partial\Omega}\equiv \rho_0>0$.
	Since $\nabla^{\tilde{g}} u =\rho^{-2} \nabla^{g} u,\, \nu_{\tilde{g}}=\rho^{-1}\nu_{g}$, we conclude
	\begin{gather*}
		\Delta_{\tilde{g}} u=0\iff \Delta_{g} u=0 \text{ in } \Omega,\\
		\langle \nabla^{\tilde{g}} u, \nu_{\tilde{g}}\rangle_{\tilde{g}}=\rho_{0}^{-1}\langle \nabla^{g} u, \nu_{g}\rangle_{g} \text{ on } \partial \Omega, \quad |\nabla^{\tilde{g}} u|^2_{\tilde{g}}=\rho_{0}^{-2}|\nabla^{g} u|^2_{g} \text{ on } \partial \Omega,
	\end{gather*}
	where we used $\Delta_{\tilde{g}}=\rho^{-2}\Delta_{g}$  for the case of dimension two.
	In fact, we have seen that the Steklov spectrum of $(\Omega,\tilde{g})$ is the Steklov spectrum of $(\Omega,g)$ multiplying $\rho_0^{-1}$ with the same Steklov eigenfunctions.
\end{proof}

\section{The case of simply connected surfaces}\label{Sect:simply}
In this section, we will prove the following theorem.
\begin{thm}\label{thm-sim-conn}
	Let $(\Omega,g_\Omega)$ be a simply connected compact surface with boundary $\partial \Omega$. Then $\Omega$ is solvable for the overdetermined problem \eqref{PP-I} if and only if $\Omega\sim \mathbb{D}$.
\end{thm}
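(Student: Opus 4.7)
The sufficiency is immediate: on the flat disk $(\mathbb{D}, g_{\text{flat}})$, every linear function $u = a_1 x + a_2 y$ is harmonic, satisfies $\partial u/\partial\nu = u$ on $\partial\mathbb{D}$ (so $\sigma_1 = 1$), and has constant $|\nabla u|$, exhibiting $\mathbb{D}$ as solvable; Lemma \ref{lem-conf} then propagates this along $\sim$. For the necessity I would proceed in two stages.

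Stage 1: boundary analysis. Assume $(\Omega, g_\Omega)$ is simply connected and admits a solution $u$ of \eqref{OS-I}. In the tubular coordinates $(x, y)$ from Section \ref{Section:nep}, $\lambda(0, y) \equiv 1$ so $y$ is arc length on $\partial\Omega$ and the outward normal is $-\partial_x$; conditions \eqref{eq-s2} and \eqref{eq-s3} become
\begin{equation*}
u_x(0, y) = -\sigma_1 u(0, y), \qquad u_y(0, y)^2 + \sigma_1^2 u(0, y)^2 = c^2.
\end{equation*}
Differentiating the second identity in $y$ gives $u_y\bigl(u_{yy} + \sigma_1^2 u\bigr) = 0$, and since $u|_{\partial\Omega}$ is nonconstant (Lemma \ref{lem-2.2}) and real-analytic, this forces $u_{yy} + \sigma_1^2 u \equiv 0$ on $\partial\Omega$, hence
\begin{equation*}
u(0, y) = \tfrac{c}{\sigma_1}\cos(\sigma_1 y + \alpha)
\end{equation*}
for some constant $\alpha$. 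Periodicity around the loop $\partial\Omega$ then forces $\sigma_1 L(\partial\Omega) = 2\pi n$ for some positive integer $n$.

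Stage 2: geometric analysis using the auxiliary function $|\nabla u|^2$. Using $\Delta u = 0$ together with $\lambda_y(0, y) \equiv 0$ and $\lambda_x(0, y) = -\kappa$ from \eqref{eq-geo-cur}, and differentiating $u_x = -\sigma_1 u$ in $y$, I would compute
\begin{equation*}
u_{xx}(0, y) = \sigma_1(\sigma_1 - \kappa)\, u, \qquad u_{xy}(0, y) = -\sigma_1 u_y,
\end{equation*}
which combine cleanly to yield the boundary identity
\begin{equation*}
\frac{\partial |\nabla u|^2}{\partial \nu} = 2c^2(\sigma_1 - \kappa) \quad\text{on } \partial\Omega.
\end{equation*}
Integrating this, applying the divergence theorem and the Bochner formula $\Delta|\nabla u|^2 = 2|\Hess u|^2 + 2K|\nabla u|^2$, and then using the Gauss--Bonnet formula $\int_\Omega K\,dA + \int_{\partial\Omega}\kappa\,ds = 2\pi$ for simply connected $\Omega$ together with $\sigma_1 L = 2\pi n$, produces the integral identity
\begin{equation*}
\int_\Omega |\Hess u|^2\,dA + \int_\Omega K\bigl(|\nabla u|^2 - c^2\bigr)\,dA = 2\pi c^2(n-1),
\end{equation*}
which I expect to be the essential content of Proposition \ref{prop-2}. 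Finally, the Weinstock inequality for simply connected compact surfaces, $\sigma_1 L \leq 2\pi$ with equality iff the surface is $\sim \mathbb{D}$, forces $n = 1$, and the rigidity clause then gives $(\Omega, g_\Omega) \sim \mathbb{D}$.

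The principal obstacle is the execution of Stage 2: unlike Lee--Seo, who used $K > 0$ to make $|\nabla u|^2$ subharmonic and $\psi \le 0$ to control a boundary sign, here neither $K$ nor $\kappa$ is signed a priori, and the term $\int_\Omega K(|\nabla u|^2 - c^2)$ in the identity above is indefinite. Proposition \ref{prop-2}'s ``finer and novel'' analysis must therefore feed the precise boundary structure $u|_{\partial\Omega} = \frac{c}{\sigma_1}\cos(\sigma_1 y)$ from Stage 1 back into the interior geometry so that the identity matches the Weinstock rigidity without any auxiliary curvature hypothesis; this interplay is where the bulk of the technical work of the section should lie.
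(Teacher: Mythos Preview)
Your Stage~1, once patched, already completes the proof; Stage~2 is superfluous and the ``principal obstacle'' you worry about never actually arises.

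The patch: real-analyticity of $u|_{\partial\Omega}$ is not available, since $\partial\Omega$ is only assumed smooth. But you don't need it. Let $G=\{y:u_y(0,y)\ne0\}$; on $G$ you have $u_{yy}+\sigma_1^2 u=0$, hence on $\bar G$ by continuity. If $\partial\Omega\setminus\bar G$ were nonempty it would be open, and on it $u_y\equiv0$, so $u$ is locally constant with value $\pm c/\sigma_1$, giving $u_{yy}+\sigma_1^2 u=\pm\sigma_1 c\ne0$ there; this contradicts continuity at any point of $\partial(\partial\Omega\setminus\bar G)\subset\bar G$. So $\bar G=\partial\Omega$ and $u_{yy}+\sigma_1^2 u\equiv0$. (This connectedness argument is exactly what drives the paper's Proposition~\ref{prop-2}.) Once you have $u(0,y)=\tfrac{c}{\sigma_1}\cos(\sigma_1 y+\alpha)$ and $\sigma_1 L(\partial\Omega)=2\pi n$, Weinstock's bound $\sigma_1 L\le 2\pi$ forces $n=1$ and its rigidity clause gives $\Omega\sim\mathbb D$. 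No interior integral identity is required, and the indefinite term $\int_\Omega K(|\nabla u|^2-c^2)$ never needs to be controlled.

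The paper takes a different but parallel route and explains why that indefinite term is a mirage. Instead of $|\nabla u|^2$ it works with $f=\log|\nabla u|^2$: in dimension two the Bochner terms collapse exactly to $\tfrac12\Delta f=K$ (Lemma~\ref{lem-1}), with no residual Hessian and no $K|\nabla u|^2$. Proposition~\ref{prop-2} is then simply the boundary identity $\tfrac12\,\partial f/\partial\nu=\sigma_1-\kappa$ --- equivalent, since $|\nabla u|^2\equiv c^2$ on $\partial\Omega$, to your $\partial|\nabla u|^2/\partial\nu=2c^2(\sigma_1-\kappa)$, not the integral identity you guessed. Integrating $\int_\Omega K=\int_{\partial\Omega}(\sigma_1-\kappa)$ and invoking Gauss--Bonnet yields $\sigma_1 L=2\pi$ in one line, after which Weinstock rigidity finishes. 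So both proofs end at Weinstock equality but reach $\sigma_1 L=2\pi$ differently: yours via the boundary ODE and periodicity, the paper's via the logarithmic auxiliary function and Gauss--Bonnet. Your route is slightly more elementary; the paper's has the advantage that it transports cleanly to the multiply connected case in Section~\ref{Sect:multiply}, where the Euler characteristic enters naturally through Gauss--Bonnet.
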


We introduce an auxiliary function $f=\log |\nabla u|$  (cf.~\cites{PP91, LS25}),
where $u$ is any first Steklov eigenfunction of $\Omega$.
Due to Lemma~\ref{lem-2.3}, $f$ is well-defined.
We need to analyze $f$ carefully.

\begin{lem}[cf.~\cite{LS25}*{Lemma 2.2}]\label{lem-1}
	Assume $u$ is a harmonic function with no critical points in $\Omega$.
	Then the Laplacian of $f=\log |\nabla u|$ satisfies
	\begin{equation*}
		\Delta f= K \text{ in } \Omega,
	\end{equation*}
	where $K$ is the Gaussian curvature of $\Omega$.
\end{lem}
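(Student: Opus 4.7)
The plan is to derive the identity from the Bochner formula together with the fact that in two dimensions the Hessian of a harmonic function is symmetric and traceless, which forces a refined Kato-type cancellation in $\Delta\log|\nabla u|^2$.

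First, since Lemma \ref{lem-2.3} guarantees $|\nabla u|>0$ throughout $\Omega$, the function $f=\log|\nabla u|^2$ is smooth, and I can apply $\Delta\log w = \Delta w/w - |\nabla w|^2/w^2$ with $w=|\nabla u|^2$. For the numerator I use the Bochner formula
$$\tfrac12\Delta|\nabla u|^2 = |\Hess u|^2 + \langle\nabla\Delta u,\nabla u\rangle + \mathrm{Ric}(\nabla u,\nabla u),$$
which, because $u$ is harmonic and $\mathrm{Ric}=K\,g$ on a surface, reduces to $\tfrac12\Delta|\nabla u|^2 = |\Hess u|^2 + K|\nabla u|^2$. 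For the other term I use the standard identity $\nabla|\nabla u|^2 = 2\Hess u(\nabla u)$, which gives $|\nabla|\nabla u|^2|^2 = 4|\Hess u(\nabla u)|^2$. Assembled, this yields
$$\tfrac12\Delta f = K + \frac{|\Hess u|^2}{|\nabla u|^2} - \frac{2|\Hess u(\nabla u)|^2}{|\nabla u|^4}.$$

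The central step is then the pointwise algebraic identity $|\Hess u|^2\,|\nabla u|^2 = 2\,|\Hess u(\nabla u)|^2$. Choosing an orthonormal frame $\{e_1,e_2\}$ with $e_1 = \nabla u/|\nabla u|$ and using that $\Hess u$ is symmetric and trace-free (by harmonicity in dimension two), I can write
$$\Hess u = \begin{pmatrix} a & b \\ b & -a \end{pmatrix}$$
in this frame, so $|\Hess u|^2 = 2(a^2+b^2)$ while $|\Hess u(e_1)|^2 = a^2+b^2$, giving the claimed equality. Substituting into the previous display produces $\tfrac12\Delta f = K$, as required.

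No serious obstacle is expected; the one point requiring care is the smoothness of $f$ on all of $\Omega$, for which Lemma \ref{lem-2.3} is essential. A variant of this argument, should one prefer a coordinate computation, is to work in local isothermal coordinates $g=e^{2\varphi}(\mathrm{d}x^2+\mathrm{d}y^2)$: then $\log|\nabla u|^2 = -2\varphi + \log(u_x^2+u_y^2)$, the first summand contributes $2K$ via $K = -e^{-2\varphi}(\varphi_{xx}+\varphi_{yy})$, and the second is flat-harmonic because $u_x - i u_y$ is a nonvanishing holomorphic function on $\Omega$.
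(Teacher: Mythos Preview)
Your proof is correct and follows essentially the same approach as the paper: both expand $\tfrac12\Delta\log|\nabla u|^2$ via a Bochner/Ricci computation and then use an orthonormal frame together with the trace-free structure of $\Hess u$ in dimension two to obtain the cancellation; the only cosmetic difference is that the paper diagonalizes $\Hess u$ whereas you align $e_1$ with $\nabla u$. Your closing isothermal-coordinate remark (via the holomorphicity of $u_x-iu_y$) is a nice alternative that the paper does not mention.
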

\begin{proof}
	This has been proven in \cite{LS25}. Here we give a quick proof by using the moving frame. We will see that the dimension $2$ is crucial.

	We choose a local orthonormal frame $\{e_1, e_2\}$ on $\Omega$, and denote $\nabla u=u_1e_1+u_2e_2$, $\Hess u(e_i,e_j)=u_{ij}$. Without loss of generality, we can assume $(u_{ij})$ is diagonal. Hence, we have $u_{11}+u_{22}=\Delta u=0, u_{12}=u_{21}=0$.
	The Ricci identity gives
	\begin{equation*}
		\sum_{i,j} u_i u_{ijj}=\sum_{i,j} u_i u_{jji}+\sum_{i,j,k} u_i u_k R_{kjij}=u_1^2R_{1212}+u_2^2R_{2121}=|\nabla u|^2K.
	\end{equation*}
	Now a direct computation yields
	\begin{align*}
		\Delta f&=\frac{1}{2}\log|\nabla u|^2=\frac{1}{2}\frac{\Delta |\nabla u|^2}{|\nabla u|^2}-\frac{1}{2}\frac{|\nabla |\nabla u|^2|^2}{|\nabla u|^4}\\
		&=\frac{\sum_{i,j}(u_i u_{ijj}+u_{ij}^2)}{|\nabla u|^2}-\frac{2\sum_{j}(\sum_{i}u_i u_{ij})^2}{|\nabla u|^4}\\
		&=K+\frac{u_{11}^2+u_{22}^2}{|\nabla u|^2}-\frac{2(u_1^2u_{11}^2+u_2^2u_{22}^2)}{|\nabla u|^4}\\
		&=K+\frac{2u_{11}^2}{|\nabla u|^2}-\frac{2(u_1^2+u_2^2)u_{11}^2}{|\nabla u|^4}\\
		&=K.
	\end{align*}
	This completes the proof.
\end{proof}

We don't need \eqref{eq-s3} when applying the above lemma to the first Steklov eigenfunction $u$.
However, this additional condition is necessary in the key proposition below.
\begin{prop}\label{prop-2}
	Let $\Omega$ be a simply connected surface and $u$ be a first Steklov eigenfunction satisfying \eqref{eq-s3}.
	Then the normal derivative of $f=\log |\nabla u|$  on the boundary satisfies
	\begin{equation*}
		\frac{\partial f}{\partial \nu}=\sigma_1-\kappa \text{ on } \partial \Omega,
	\end{equation*}
	where $\kappa$ is the geodesic curvature of $\partial \Omega$.
\end{prop}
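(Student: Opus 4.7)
My plan is to carry out the computation in the boundary-normal coordinates $(x,y)$ from Section~\ref{Section:nep}, where $g_\Omega=dx^2+\lambda^2(x,y)dy^2$, $\partial/\partial\nu=-\partial_x$ on $\partial\Omega$, and the boundary relations $\lambda(0,y)=1$, $\lambda_y(0,y)=0$, $\lambda_x(0,y)=-\kappa(y)$ hold. Starting from \eqref{eq2.2} and differentiating $|\nabla u|^2=u_x^2+\lambda^{-2}u_y^2$ in $x$ at $x=0$ gives
\[
(|\nabla u|^2)_x\big|_{\partial\Omega}=2u_x u_{xx}+2\kappa u_y^2+2u_y u_{xy}.
\]
Two routine substitutions reduce this further: the harmonicity equation \eqref{eq2.3} restricted to $\partial\Omega$ yields $u_{xx}=\kappa u_x-u_{yy}$, while differentiating $u_x=-\sigma_1 u$ tangentially gives $u_{xy}=-\sigma_1 u_y$. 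After substituting and using $u_x^2+u_y^2=c^2$, I arrive at
\[
(|\nabla u|^2)_x\big|_{\partial\Omega}=-2u_x u_{yy}+2\kappa c^2-2\sigma_1 u_y^2.
\]

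The crux is to establish $u_{yy}=-\sigma_1^2 u$ on all of $\partial\Omega$. Setting $U(y):=u(0,y)$, the conditions $u_x=-\sigma_1 u$ and $|\nabla u|^2=c^2$ combine to give the constraint $(U')^2+\sigma_1^2U^2=c^2$. Differentiating in $y$ produces $U'(U''+\sigma_1^2U)=0$, so $U''+\sigma_1^2 U=0$ on the open set $\{U'\ne0\}$. To extend this to all of $\partial\Omega$, I would show $\{U'=0\}$ has empty interior. If instead $U'\equiv 0$ on some maximal proper open arc $J\subset\partial\Omega$, then the constraint forces $U\equiv \pm c/\sigma_1$ on $J$, so at an endpoint $y_0\in\partial J$ one has $U(y_0)=\pm c/\sigma_1$, $U'(y_0)=0$, and $U''(y_0^-)=0$. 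On the other side of $y_0$, $U$ solves $U''+\sigma_1^2 U=0$ with this Cauchy data, whose unique solution forces $U''(y_0^+)=\mp\sigma_1 c\ne 0$, contradicting the $C^2$-smoothness of $U$. The remaining possibility $J=\partial\Omega$ is excluded by Lemma~\ref{lem-2.2}. Density of $\{U'\ne 0\}$ and continuity then give $u_{yy}=-\sigma_1^2 u$ throughout $\partial\Omega$.

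With this identity in hand, substituting $u_{yy}=-\sigma_1^2 u$, $u_x=-\sigma_1 u$, and $u_y^2=c^2-\sigma_1^2 u^2$ into the second displayed formula collapses the right-hand side to $2c^2(\kappa-\sigma_1)$. Dividing by $|\nabla u|^2=c^2$ and recalling $\frac{\partial}{\partial\nu}=-\partial_x$ yields $\frac{1}{2}\frac{\partial f}{\partial\nu}=\sigma_1-\kappa$, as claimed. The main obstacle is the density argument in the previous paragraph: the algebraic identity $U'(U''+\sigma_1^2 U)=0$ is immediate, but upgrading it to $U''+\sigma_1^2 U\equiv 0$ on $\partial\Omega$ requires the smooth matching argument together with Lemma~\ref{lem-2.2} to rule out that $u_y$ vanishes on an open arc.
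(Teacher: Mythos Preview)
Your argument is correct and follows essentially the same route as the paper: both proofs compute in the boundary-normal coordinates, obtain the desired identity directly on the open set $\{u_y\neq 0\}$, and then use a continuity/density argument together with Lemma~\ref{lem-2.2} to exclude the possibility that $u_y$ vanishes on an open arc; the only cosmetic difference is that you isolate the intermediate identity $u_{yy}=-\sigma_1^2 u$ on $\partial\Omega$, while the paper runs the same dichotomy directly on the value of $\tfrac12\partial_\nu f$ (obtaining $\sigma_1-\kappa$ versus $-\kappa$). One small imprecision: you cannot assert that $U$ solves $U''+\sigma_1^2U=0$ on the whole ``other side'' of $y_0$, only where $U'\neq 0$; however, maximality of $J$ guarantees a sequence $y_n\to y_0$ outside $J$ with $U'(y_n)\neq 0$, and continuity of $U''$ then gives $U''(y_0)=-\sigma_1^2 U(y_0)=\mp\sigma_1 c$, which is the contradiction you want.
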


\begin{proof}
	Denote $F=\{p\in \partial\Omega \mid \frac{\partial f}{\partial \nu}=\sigma_1-\kappa \text{ at } p\}$. It is sufficient to prove $F=\partial\Omega$.

	Clearly, $F$ is closed while its complement set
	$F^c=\partial\Omega\backslash F$ is open.
	Consider the open set $G=\{p\in \partial\Omega \mid u_y(0,y)\neq 0 \text{ at } p=\gamma(y)\}$. We make the following assertions.

	\textbf{Claim 1}: $G$ is not empty.

	\textbf{Claim 2}: $\overline{G}(=\supp u_y|_{\partial\Omega})\subset F$.

	\textbf{Claim 3}: if $F^c$ is not empty, then $\frac{\partial f}{\partial \nu}=-\kappa$ on $F^c$.

	By Claims 1 and 2, $F$ is not empty.
	If $F^c$ is not empty, then $F^c$ is not closed, otherwise $F^c=\partial\Omega$ by the connectedness of $\partial\Omega$, which contradicts $F\neq \emptyset$. Hence, there is a point $p\in \overline{F^c}\cap F$. By Claim 3 and the continuity, we have
	$\frac{\partial f}{\partial \nu}=-\kappa$ on $F^c$ at $p$ ; on the other hand, $\frac{\partial f}{\partial \nu}=\sigma_1-\kappa$ at $p$ by the definition of $F$. This is impossible.
	Therefore, $F^c$ must be empty, i.e., $F=\partial\Omega$.

	\smallskip
	Now we prove Claims 1--3 to complete the whole proof.
	We must carefully distinguish between taking derivatives on the boundary and taking derivatives within the domain and then restricting them to the boundary. Although all the calculations can be done via the moving frame, here we adopt the local coordinates in Sect. \ref{Section:nep} to see it more clearly.

	Take $\phi=u$ in \eqref{eq2.3}, we have
	\begin{equation*}
		0=\Delta u=u_{xx}+\frac{1}{\lambda^{2}}u_{yy}+\frac{\lambda_x}{\lambda}u_x-\frac{\lambda_y}{\lambda^3}u_y.
	\end{equation*}
	Restricted this to the boundary, noticing $\lambda(0,y)\equiv 1$, we have
	\begin{equation}\label{eq2.5}
		(u_{xx}+u_{yy}+\lambda_x u_x)|_{x=0}=0.
	\end{equation}

	The boundary condition \eqref{eq-s3} is equivalent to
	\begin{equation}\label{eq-s3''}
		u_x^2(0,y)+u_y^2(0,y)=c^2,
	\end{equation}
	where we used \eqref{eq2.2} and $\lambda(0,y)=1$. Differentiating \eqref{eq-s3''} with respect to $y$, we obtain
	\begin{equation}\label{eq2.6}
		(u_x u_{xy}+u_y u_{yy})|_{x=0}=0.
	\end{equation}

	From $\frac{\partial u}{\partial \nu}=\langle \nabla u, -\partial_x\rangle=\sigma_1 u$ on $\partial \Omega$, noticing \eqref{eq2.1}, we derive
	\begin{equation}\label{eq-s2'}
		-u_x(0,y)=\sigma_1 u(0,y)
	\end{equation}
	and then
	\begin{equation}\label{eq2.7}
		u_{xy}(0,y)=-\sigma_1 u_y(0,y).
	\end{equation}

	Similarly,
	\begin{equation}\label{eq2.8}
		\frac{\partial f}{\partial \nu}=-f_x|_{x=0}=-
		\left.\frac{1}{2}\frac{|\nabla u|^2_x}{|\nabla u|^2}\right|_{x=0}.
	\end{equation}
	By using \eqref{eq2.2}, we have
	\begin{equation}\label{eq2.9}
		\frac{1}{2}|\nabla u|^2_x\Big|_{x=0}=(u_x u_{xx}+\frac{1}{\lambda^2}u_y u_{yx}-\frac{\lambda_x}{\lambda^3}u_y^2)\Big|_{x=0}
		=(u_x u_{xx}+u_y u_{yx}-\lambda_x u_y^2)\Big|_{x=0}.
	\end{equation}

	If $G$ is empty, then $u_y(0,y)\equiv 0$, which implies $u|_{\partial\Omega}$ is constant and then contradicts Lemma~\ref{lem-2.2}. Claim 1 is proved.

	Inserting \eqref{eq2.7} into \eqref{eq2.6}, we have
	\begin{equation}\label{eq2.10}
		u_y(0,y)(u_{yy}(0,y)-\sigma_1 u_x(0,y))=0.
	\end{equation}
	When $u_y(0,y)\neq 0$, from \eqref{eq2.10} we have
	\begin{equation}\label{eq2.11}
		u_{yy}(0,y)=\sigma_1 u_x(0,y).
	\end{equation}
	Combining \eqref{eq2.11} with \eqref{eq2.5}, we conclude
	\begin{equation}\label{eq2.12}
		u_{xx}|_{x=0}=(-\sigma_1 u_x-\lambda_x u_x)|_{x=0}.
	\end{equation}
	Hence, \eqref{eq2.9} becomes
	\begin{align*}
		\frac{1}{2}|\nabla u|^2_x\Big|_{x=0}&=
		\big(u_x(-\sigma_1 u_x-\lambda_x u_x)-\sigma_1 u_y u_y-\lambda_x u_y^2\big)\Big|_{x=0}\\
		&=-(\sigma_1+\lambda_x)(u_x^2+u_y^2)\Big|_{x=0}\\
		&=-(\sigma_1+\lambda_x)|\nabla u|^2\Big|_{x=0}
	\end{align*}
	by using \eqref{eq2.7} and \eqref{eq2.12}.
	Inserting this equation into \eqref{eq2.8} and using \eqref{eq-geo-cur}, we obtain
	\begin{equation*}
		\frac{\partial f}{\partial \nu}=\sigma_1-\kappa.
	\end{equation*}
	This shows $G\subset F$ and Claim 2 is obtained by the continuity.

	If $F^c$ is not empty, since $F^c$ is open, we have
	$u_y(0,y)=0$ and $u_{yy}(0,y)=0$ on $F^c$.
	From \eqref{eq2.5} we have $u_{xx}|_{x=0}=-(\lambda_x u_x)|_{x=0}$, then \eqref{eq2.9} becomes
	\begin{equation*}
		\frac{1}{2}|\nabla u|^2_x\Big|_{x=0}=-\lambda_x u_x^2\Big|_{x=0}
		=\kappa|\nabla u|^2.
	\end{equation*}
	Inserting this into \eqref{eq2.8}, we obtain $\frac{\partial f}{\partial \nu}=-\kappa$ on $F^c$. Claim 3 is proved.
\end{proof}

Now we are ready to prove the theorem.
\begin{proof}[Proof of Theorem~\ref{thm-sim-conn}]
	Assume the solvability of \eqref{PP-I},
	by using Lemma~\ref{lem-1}, Proposition~\ref{prop-2}, the Stokes formula and the Gauss--Bonnet formula, we have
	\begin{align*}
		\int_{\Omega}K &= \int_{\Omega}\Delta f= \int_{\partial \Omega}  \frac{\partial f}{\partial \nu}\\
		& =\int_{\partial \Omega}\sigma_1-\int_{\partial \Omega}\kappa\\
		& =\sigma_1 L(\partial \Omega)+\int_{\Omega}K-2\pi,
	\end{align*}
	Hence, $\sigma_1 L(\partial \Omega)=2\pi$.
	The famous Weinstock inequality (cf.~\cites{Wei54,FS11}) states $\sigma_1 L(\partial \Omega)\le 2\pi$ with equality if and only if there is conformal map from $\Omega$ to the unit disk which is an isometry on the boundary.

	Conversely, the following example shows that the disk is solvable.
	Hence, we complete the proof.
\end{proof}

\begin{ex}\label{ex-disk}
	Denote by $D_K(R)$ the geodesic disk of radius $R$  in $\mathbb{M}^2_K$.
	Then $D_K(R)$ is solvable for \eqref{PP-I}.

	Indeed, since $\sigma_1(\mathbb{D})=1$ and with multiplicity $2$, and the first eigenspace is spanned by $\{u_1^{(1)}=r\cos \theta, u_1^{(2)}=r\sin \theta\}$ (under the polar coordinates). One can easily verify that $|\nabla u_1^{(i)}|=1$ in the whole $\mathbb{D}$ including the boundary.
	Based on the fact $D_K(R)\sim \mathbb{D}$, we finally obtain the conclusion by using Lemma~\ref{lem-conf}.

	In fact, the solvability of $D_K(R)$ can also be directly verified via the explicit expression of the first Steklov eigenspace of $D_K(R)$ (cf.~\cites{BS14, Xio22}).
\end{ex}

By considering the equality case of the Weinstock inequality for domains in space forms,
we immediately obtain the following
\begin{cor}\label{cor-sim-conn}
	A simply connected compact domain $\Omega\subset\mathbb{M}_K^2$ is solvable for the overdetermined problem $\eqref{PP-I}$ if and only if $\Omega$ is a geodesic disk.
\end{cor}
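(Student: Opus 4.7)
By Theorem~\ref{thm-sim-conn}, the solvability of \eqref{OS-I} is equivalent to $\Omega \sim \mathbb{D}$, so it suffices to show that a simply connected compact domain $\Omega \subset \mathbb{M}_K^2$ satisfies $\Omega \sim \mathbb{D}$ if and only if $\Omega$ is a geodesic disk. The ``if'' direction is exactly Example~\ref{ex-disk}, so I focus on the converse.

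Suppose $\Omega \sim \mathbb{D}$. Let $\iota \colon \mathbb{D} \to \Omega$ be the inverse of the associated conformal map and write $\iota^* g_\Omega = \mu^2 g_{\mathbb{D}}$, where $g_{\mathbb{D}}$ denotes the flat Euclidean metric on $\mathbb{D}$. The ``isometry on the boundary'' condition is precisely the statement that $\mu$ is a positive constant $c$ on $\partial \mathbb{D}$. Since $\iota$ is a diffeomorphism onto $\Omega \subset \mathbb{M}_K^2$ and the Gaussian curvature is intrinsic, $\iota^* g_\Omega$ has constant curvature $K$; the standard conformal-change formula for the Gaussian curvature then yields the Liouville-type Dirichlet problem
\[
-\Delta_{g_{\mathbb{D}}} \log \mu = K \mu^2 \quad \text{in } \mathbb{D}, \qquad \mu \equiv c \;\text{ on } \partial\mathbb{D}.
\]

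The plan is to show that every positive solution of this problem is radially symmetric, which then immediately gives the conclusion. Setting $w := \log(\mu/c)$ converts the problem to $\Delta w + C e^{2w} = 0$ in $\mathbb{D}$ with $w = 0$ on $\partial\mathbb{D}$, where $C = K c^2$. The maximum principle pins down the sign of $w$ inside $\mathbb{D}$ (positive for $C>0$, negative for $C<0$, identically zero for $C=0$); the Gidas--Ni--Nirenberg moving-plane theorem then applies to $\pm w$ in the nontrivial cases $K \neq 0$ and yields that $w$, hence $\mu$, depends only on $|z|$. Once $\mu = \mu(r)$ is radial, the metric $\mu(r)^2(dr^2 + r^2 d\theta^2)$ on $\mathbb{D}$ is rotationally symmetric of constant Gaussian curvature $K$ and is therefore isometric, via the change of variable $s = \int_0^r \mu(t)\,dt$, to the restriction of $g_{\mathbb{M}_K^2}$ to a geodesic disk; together with $\iota$ this forces $\Omega$ itself to be a geodesic disk centered at $\iota(0)$.

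The main obstacle is the radial-symmetry step for $K \neq 0$, which relies on the Gidas--Ni--Nirenberg moving-plane theorem rather than a direct maximum-principle argument; the $K = 0$ case is elementary, since $\log \mu$ is then harmonic with constant boundary values on $\mathbb{D}$ and must itself be constant, forcing $\iota$ to be a Euclidean similarity.
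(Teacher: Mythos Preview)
Your argument is correct, and it supplies the details that the paper omits. The paper disposes of this corollary in one line, invoking ``the equality case in the Weinstock inequality for domains of space forms'' without further explanation; in other words, it treats the implication ``$\Omega\sim\mathbb{D}$ and $\Omega\subset\mathbb{M}_K^2$ $\Rightarrow$ $\Omega$ is a geodesic disk'' as a known fact. You instead prove this implication directly by pulling back the space-form metric to $\mathbb{D}$, reducing to the Liouville-type Dirichlet problem for the conformal factor, and then forcing radial symmetry via Gidas--Ni--Nirenberg (with the harmonic case $K=0$ handled elementarily).

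It is worth noting that your approach is exactly parallel to what the paper itself does in the multiply connected case (Corollary~\ref{cor-multi-conn-euc-hyp} and Remark after it): there too the conformal factor satisfies \eqref{eq-Gauss-PDE}, and for $K\le 0$ the paper uses the maximum principle to get uniqueness, hence symmetry, of the solution. Your use of Gidas--Ni--Nirenberg is the right substitute on the disk, and in particular it handles the case $K>0$, where the maximum-principle uniqueness argument breaks down (as the paper's own Remark~4.5 acknowledges for the annulus). So your route is more self-contained and makes explicit a step the paper leaves to the literature; the trade-off is that you import the moving-plane machinery, whereas the paper simply cites the equality characterization in Weinstock.
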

Clearly, Corollary~\ref{cor-sim-conn} recovers Theorem~\ref{thm:PP1} of Payne and Philippin when $K=0$.
It is new when $K=1$ or $-1$.

\section{The case of multiply connected surfaces}\label{Sect:multiply}
In this section, we are concerned with multiply connected surfaces.
We first obtain the characterization of Riemannian surfaces,
and then apply the result to the domains in space forms.
\subsection{Riemannian surfaces}
We will prove the following theorem.
\begin{thm}\label{thm-multi-conn}
	Let $(\Omega,g_\Omega)$ be a multiply connected  compact surface.
	Then $\Omega$ is solvable for the overdetermined problem \eqref{PP-I} if and only if $\Omega\sim \mathbb{A}_T$ for some $T\ge T_1$, where $\mathbb{A}_T$ and $T_1$ are defined in Sect.~\ref{sect-intro}.
\end{thm}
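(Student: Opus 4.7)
The strategy mirrors the simply-connected proof: analyze $f=\log|\nabla u|^2$ via $\tfrac{1}{2}\Delta f=K$ (Lemma~\ref{lem-1}) together with a boundary formula for $\partial f/\partial\nu$, then apply Gauss--Bonnet. The new ingredient is a refined version of Proposition~\ref{prop-2}, since Lemma~\ref{lem-2.2} no longer forbids $u$ from being constant on an \emph{individual} boundary component when $\partial\Omega$ is disconnected. On each boundary component $\Gamma_i$ I would prove the following dichotomy: either (a) $u|_{\Gamma_i}$ is non-constant, in which case the open set $\{u_y\ne 0\}\cap\Gamma_i$ must in fact be dense in $\Gamma_i$ (any open subset where $u_y\equiv 0$ would force $\sigma_1-\kappa=-\kappa$ at a transition point, contradicting $\sigma_1>0$), and Claim~2 of Proposition~\ref{prop-2} combined with continuity then gives $\tfrac{1}{2}\partial f/\partial\nu=\sigma_1-\kappa$ on all of $\Gamma_i$; or (b) $u|_{\Gamma_i}\equiv c_i$ is a non-zero constant (the value must be non-zero, else $|\nabla u|=c>0$ would fail on $\Gamma_i$), so $u_y=u_{yy}\equiv 0$ on $\Gamma_i$ and the computation of Claim~3 of Proposition~\ref{prop-2} yields $\tfrac{1}{2}\partial f/\partial\nu=-\kappa$ on $\Gamma_i$.

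With the dichotomy in hand, let $\Gamma_1,\ldots,\Gamma_p$ be the non-constant components and $\Gamma_{p+1},\ldots,\Gamma_k$ the constant ones. Integrating Lemma~\ref{lem-1}, applying Stokes, and combining with the Gauss--Bonnet formula yields
\[
\sigma_1\sum_{i=1}^{p}L(\Gamma_i)\;=\;2\pi\chi(\Omega)\;=\;2\pi(2-2g-k).
\]
Since the left-hand side is non-negative while $k\ge 2$ forces the right-hand side to be non-positive, both must vanish: $p=0$, $g=0$, $k=2$. Thus $\Omega$ is topologically an annulus and $u$ is constant on each of its two boundary circles.

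Write $u|_{\Gamma_i}=c_i$ ($i=1,2$). From $u_x|_{\Gamma_i}=-\sigma_1 c_i$ and $u_y|_{\Gamma_i}=0$ one gets $|\nabla u|=\sigma_1|c_i|$ on $\Gamma_i$, so the overdetermined condition yields $|c_1|=|c_2|$; combining with \eqref{eq-u1} then forces $c_1=-c_2$ and $L(\Gamma_1)=L(\Gamma_2)$. By the classical uniformization of annular Riemann surfaces, there exists a conformal map $\phi\colon\mathbb{A}_L\to\Omega$ with pullback metric $\rho^2(\mathrm{d}t^2+\mathrm{d}\theta^2)$ for some $L>0$. The pulled-back eigenfunction $\tilde u=u\circ\phi$ is harmonic on the flat cylinder with constant boundary values $\pm c_1$, hence is affine: $\tilde u(t,\theta)=c_2 t/L$. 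The Steklov condition at $t=\pm L$, computed in the metric $\rho^2(\mathrm{d}t^2+\mathrm{d}\theta^2)$, reads $\rho^{-1}\,c_2/L=\sigma_1 c_2$ at $t=L$ and the symmetric relation at $t=-L$, so $\rho\equiv 1/(\sigma_1 L)$ on each boundary circle with the \emph{same} constant value. Lemma~\ref{lem-conf} then gives $\Omega\sim\mathbb{A}_L$. Finally, the well-known Steklov spectrum of the flat cylinder makes $1/L$ the smallest positive eigenvalue precisely when $1/L\le\tanh L$, i.e.\ $L\ge L_0$. The converse direction will follow from Example~\ref{ex-annulus} combined with Lemma~\ref{lem-conf}.

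The main obstacle I anticipate is step one: propagating the ``$u$ constant'' computation (the original Claim~3, which in the simply-connected setting was used only to extract a contradiction) to an entire boundary component, and in particular ruling out mixed regimes within a single connected $\Gamma_i$. A secondary subtlety lies in showing that the conformal factor $\rho$ is \emph{constant} on each boundary circle of $\mathbb{A}_L$ rather than merely integrating to the right total; this ultimately rests on the affine form of the harmonic extension with constant boundary data, which becomes available only after the topological reduction $p=0$, $g=0$, $k=2$ has been carried out.
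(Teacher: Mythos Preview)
Your proposal is correct and follows the paper's strategy closely: the dichotomy you describe is exactly Proposition~\ref{prop-3}, and the Gauss--Bonnet computation, the deduction $c_1=-c_2$, $L(\Gamma_1)=L(\Gamma_2)$, and the conformal uniformization to a flat cylinder all match the paper's argument.

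The one genuine difference lies in how you show that the conformal factor $\rho$ is constant on $\partial\mathbb{A}_L$. The paper runs a two-sided variational comparison: it uses the linear function $\ell(t)=t$ pulled back to $\Omega$ as a test function to get $\sigma_1(\Omega)\le E(\ell)$, and pushes $u$ forward to $\mathbb{A}_L$ to get the reverse inequality, concluding that $\ell\circ\Phi$ is itself a first eigenfunction of $\Omega$; the Steklov condition for $\ell$ then forces $\rho$ to be constant on each boundary circle. You instead observe directly that $\tilde u=u\circ\phi$ is harmonic on the flat cylinder with constant boundary data $\pm c_1$, hence equals $c_2t/L$ by uniqueness of the Dirichlet problem, and then read off $\rho\equiv 1/(\sigma_1 L)$ from the Steklov condition for $u$ itself. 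Your route is shorter and avoids the test-function bookkeeping; once you know $\rho$ is constant on the boundary, the eigenfunctions of $(\Omega,g_\Omega)$ and $(\mathbb{A}_L,g_0)$ coincide, so $\tilde u=c_2t/L$ is a \emph{first} eigenfunction of the flat cylinder, giving $\sigma_1(\mathbb{A}_L)=1/L$ and hence $L\ge L_0$ immediately. The paper's variational argument, by contrast, must implicitly appeal to Proposition~\ref{prop-cylinder} (via Lemma~\ref{lem-conf}) to extract $L\ge L_0$ at the end. Both arguments are valid; yours is the more economical of the two.
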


First, let us check the solvability of the cylinder $\mathbb{A}_T$.
It is a special case of \cite{GP17}*{Example 1.3.3} (originally from \cite{CESG11}*{Lemma 6.1}) and shows that both Proposition~\ref{prop-2} and Theorem~\ref{thm-sim-conn} no longer hold if the simply connected condition is removed.
\begin{ex}\label{ex-annulus}
	The boundary $\partial\mathbb{A}_T$ has two connected components $\Gamma_{\pm}\coloneq\{\pm T\}\times S^1$.

	By the method of separation of variables, the Steklov spectrum of $\mathbb{A}_T$ is given by
	\begin{equation*}
		0, \quad 1/T, \quad k\tanh(kT), \quad k\coth(kT)\quad  (k\ge 1),
	\end{equation*}
	and the corresponding eigenfunctions are
	\begin{equation*}
		1, \quad t, \quad \cosh(kt)v_k(\theta), \quad \sinh(kt)v_k(\theta) \quad  (k\ge 1),
	\end{equation*}
	where $v_k(\theta)=\cos(k\theta)$ or $\sin(k\theta)$ is the eigenfunction of the Laplacian operator on $S^1$ corresponding to the eigenvalue $\lambda_k(S^1)=k^2$.

	Since $\tanh(kT)\le \coth(kT)$ for any $k\ge 1$ and $\tanh(\cdot)$ is strictly increasing, we have $\sigma_1=1/T$ or $\sigma_1=\tanh T$.
	By the monotonicity of $1/T$ and $\tanh T$ with respect to $T$ and the asymptotic behaviors as $T$ tends to $0$ and $\infty$, there is a unique constant $T_1>0$ such that $1/T_1=\tanh T_1$.

	(1) When $T\ge T_1$, we have $1/T\le \tanh T$. Hence, $\sigma_1=1/T$.
	Moreover, $\sigma_1$ is simple when $T> T_1$ while $\sigma_1$ has multiplicity $3$ when $T=T_1$.
	In this case, $u_1=t$ solves \eqref{PP-I}, since $\nabla u_1=\partial_t$ and then $|\nabla u_1|\equiv 1$ on $\mathbb{A}_T$.

	Moreover, $u_1\equiv T$ on $\Gamma_{+}$ while $u_1\equiv -T$ on $\Gamma_{-}$, i.e., $u_1$ is constant on each connected component of the boundary respectively but is not constant on the whole boundary. Meanwhile, $\frac{\partial}{\partial \nu}\log |\nabla u_1|=0=-\kappa$, which coincides with Claim 3 in the proof of Proposition~\ref{prop-2}, but Proposition~\ref{prop-2} itself fails.

	(2) When $T<T_1$, we have $\tanh T<1/T$. Hence, $\sigma_1=\tanh T$, which has multiplicity $2$. In this case, any first Steklov eigenfunction can be written as
	\begin{equation*}
		u_1=(A\cos\theta+B\sin\theta)\cosh t.
	\end{equation*}
	Since
	\begin{equation*}
		|\nabla u_1|^2= (A\cos\theta+B\sin\theta)^2\sinh^2 T+ (B\cos\theta-A\sin\theta)^2\cosh^2 T\eqcolon Q(\theta)
	\end{equation*}
	on the boundary,
	if $Q(\theta)$ is constant, then
	\begin{equation*}
		A^2\sinh^2 T +B^2\cosh^2 T =Q(0)=Q(\pi/2)= B^2 \sinh^2 T+ A^2\cosh^2 T
	\end{equation*}
	implies $A^2=B^2$.
	Without loss of generality, we take $A=B=1$.
	However,
	\begin{equation*}
		Q(\pi/4)=2\sinh^2 T< \cosh^2 T =Q(-\pi/4),
	\end{equation*}
	which means that $|\nabla u_1|$ is not constant on the boundary.
\end{ex}

In summary, we have proved the following result.
\begin{prop}\label{prop-cylinder}
	The cylinder $\mathbb{A}_T$ is solvable for the overdetermined problem \eqref{PP-I} if and only if $T\ge T_1$, where $T_1$ is the unique positive solution of the equation $\tanh x=1/x$.
\end{prop}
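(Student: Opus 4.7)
The plan is to treat Proposition \ref{prop-cylinder} as a direct consequence of an explicit separation-of-variables analysis of the Steklov spectrum on $\mathbb{A}_L$, essentially packaging the computation in Example \ref{ex-annulus} into a clean proof.

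First I would diagonalize the Steklov problem on $\mathbb{A}_L$ by expanding an eigenfunction in the basis $\{1,\cos(k\theta),\sin(k\theta)\}$ of $L^2(S^1)$. Writing $u(t,\theta) = \sum_k (a_k(t)\cos k\theta + b_k(t)\sin k\theta)$, the equation $\Delta u = 0$ forces each coefficient to be a linear combination of $\cosh(kt)$ and $\sinh(kt)$ for $k\geq 1$ and an affine function for $k=0$. Matching the boundary condition $\partial_\nu u = \sigma u$ on the two components $\Gamma_\pm = \{\pm L\}\times S^1$ decouples the even and odd parts in $t$, producing the spectrum $\{0,\,1/L,\,k\tanh(kL),\,k\coth(kL)\}_{k\geq 1}$ with the eigenfunctions listed in Example \ref{ex-annulus}.

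Next I would identify $\sigma_1$. Since $k\mapsto k\tanh(kL)$ and $k\mapsto k\coth(kL)$ are strictly increasing in $k\geq 1$ with $\tanh\leq\coth$, the only candidates for $\sigma_1$ are $1/L$ and $\tanh L$. The function $h(L):=\tanh L - 1/L$ is strictly increasing, negative as $L\to 0^+$ and positive as $L\to\infty$, so there is a unique $L_0>0$ with $h(L_0)=0$; thus $\sigma_1 = 1/L$ when $L\geq L_0$ and $\sigma_1 = \tanh L$ when $L<L_0$.

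For the sufficiency direction ($L\geq L_0$), I would simply exhibit the linear eigenfunction $u_1(t,\theta)=t$, observe $\nabla u_1 = \partial_t$, and check $|\nabla u_1|\equiv 1$ everywhere, so \eqref{OS-I} holds. For the necessity direction ($L<L_0$), every first Steklov eigenfunction has the form $u_1 = (A\cos\theta + B\sin\theta)\cosh t$, and on $\Gamma_\pm$ one computes
\begin{equation*}
|\nabla u_1|^2\big|_{t=\pm L} = (A\cos\theta+B\sin\theta)^2\sinh^2 L + (B\cos\theta-A\sin\theta)^2\cosh^2 L.
\end{equation*}
Requiring this trigonometric polynomial in $\theta$ to be constant forces $A^2=B^2$ (by evaluating at $\theta=0,\pi/2$), and then evaluating at $\theta=\pm\pi/4$ gives $2\sinh^2 L = \cosh^2 L + \sinh^2 L$, i.e.\ $\sinh^2 L = \cosh^2 L$, which is impossible. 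Hence \eqref{OS-I} admits no solution when $L<L_0$.

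The main obstacle, and really the only delicate point, is verifying that the trigonometric identity forces a contradiction in the second case; everything else is routine separation of variables and monotonicity of $\tanh$. The computation already sketched in Example \ref{ex-annulus} handles this cleanly, so the proof essentially reduces to collecting the pieces.
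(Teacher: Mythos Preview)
Your proposal is correct and follows exactly the same route as the paper's Example \ref{ex-annulus}: separate variables to get the Steklov spectrum of $\mathbb{A}_L$, compare $1/L$ with $\tanh L$ to locate $\sigma_1$, exhibit $u_1=t$ when $L\ge L_0$, and rule out constancy of $|\nabla u_1|$ on the boundary for $L<L_0$ by evaluating the trigonometric polynomial $Q(\theta)$ at a few test angles. The only slip is in your intermediate line ``$2\sinh^2 L = \cosh^2 L + \sinh^2 L$'': with $A=B$ one actually gets $Q(\pi/4)=2\sinh^2 L$ and $Q(-\pi/4)=2\cosh^2 L$, but of course this still yields $\sinh^2 L=\cosh^2 L$, so the conclusion stands.
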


Example~\ref{ex-annulus} motivates us to give a modified version of Proposition~\ref{prop-2}.
\begin{prop}\label{prop-3}
	Let $(\Omega, g_{\Omega})$ be a connected orientable compact surface.
	Assume $\partial \Omega$ has $b$ connected components, denoted by $\Gamma_1,\cdots, \Gamma_b (b\ge 1)$.
	Let $u$ be a nontrivial solution of \eqref{PP-I} and $f=\log |\nabla u|$.
	For each $i$, we have the following assertions.

	(1) If $u$ is constant on $\Gamma_i$, then
	\begin{equation*}
		\frac{\partial f}{\partial \nu}=-\kappa \text{ on } \Gamma_i.
	\end{equation*}

	(2) If $u$ is not constant on $\Gamma_i$, then
	\begin{equation*}
		\frac{\partial f}{\partial \nu}=\sigma_1-\kappa \text{ on } \Gamma_i.
	\end{equation*}

	Moreover, Assertion (1) does not occur when $b=1$.
\end{prop}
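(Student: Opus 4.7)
The plan is to apply the local argument of Proposition \ref{prop-2} on each boundary component $\Gamma_i$ separately. The two computations performed in Claims 2 and 3 of that proof correspond exactly to the two cases of the present statement; the case dichotomy here plays the role that Lemma \ref{lem-2.2} played there (Lemma \ref{lem-2.2} is unavailable once $\partial\Omega$ has more than one component). Fix $\Gamma_i$ and adopt the boundary normal coordinates of Section \ref{Section:nep}. The relations \eqref{eq2.5}, \eqref{eq2.6}, \eqref{eq2.7} and the formula \eqref{eq2.8}--\eqref{eq2.9} for $\tfrac{1}{2}\frac{\partial f}{\partial \nu}$ are local in nature and remain valid on $\Gamma_i$, since they use only $\Delta u = 0$, $|\nabla u|=c$, and $\partial u/\partial\nu = \sigma_1 u$ restricted near a single component.

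For Case (1), $u$ constant on $\Gamma_i$ yields $u_y(0,\cdot)\equiv u_{yy}(0,\cdot)\equiv 0$ along $\Gamma_i$, so \eqref{eq2.5} reduces to $u_{xx}|_{x=0}=-\lambda_x u_x|_{x=0}$, and substituting this into \eqref{eq2.9} gives $\tfrac{1}{2}\frac{\partial f}{\partial \nu} = \lambda_x = -\kappa$. This is exactly the calculation already performed in Claim 3 of Proposition \ref{prop-2}.

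For Case (2), set $G_i := \{y \in \Gamma_i : u_y(0,y) \neq 0\}$, which is open and, by the case hypothesis, nonempty. Repeating the Claim 2 algebra on $G_i$ (that is, \eqref{eq2.11}--\eqref{eq2.12} followed by \eqref{eq2.9}) yields $\tfrac{1}{2}\frac{\partial f}{\partial \nu} = \sigma_1 - \kappa$ on $G_i$, and by continuity on $\overline{G_i}$. Let $F_i \subset \Gamma_i$ denote the closed set on which this identity holds; then $\overline{G_i} \subset F_i$, so $F_i$ is nonempty. If $F_i^c$ were nonempty, it would be open in $\Gamma_i$ and disjoint from $G_i$, so $u_y \equiv 0$ on $F_i^c$ and, since $F_i^c$ is open, $u_{yy} \equiv 0$ there as well; the Case (1) computation would then give $\tfrac{1}{2}\frac{\partial f}{\partial \nu} = -\kappa$ on $F_i^c$. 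Since $F_i \neq \emptyset$ and $\Gamma_i$ is connected, $F_i^c$ cannot be closed, so there exists $p \in F_i \cap \overline{F_i^c}$; continuity forces $\sigma_1 - \kappa(p) = -\kappa(p)$, i.e., $\sigma_1 = 0$, a contradiction. The only mildly delicate point is this final connectedness bootstrap on the circle $\Gamma_i$, but it goes through in exactly the same way as in Proposition \ref{prop-2}, so I do not anticipate a real obstacle beyond carefully tracking the case split.
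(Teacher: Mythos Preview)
Your proposal is correct and follows essentially the same route as the paper's own proof: the paper simply says to repeat the argument of Proposition~\ref{prop-2} on each $\Gamma_i$, with Assertion~(1) coming from Claim~3 and Assertion~(2) from the full Claim~2/Claim~3 bootstrap once $G$ is nonempty. Your observation that the case split here replaces the appeal to Lemma~\ref{lem-2.2} (which required $\partial\Omega$ connected) is exactly the point.
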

\begin{proof}
	It suffices to replace $\partial \Omega$ with $\Gamma_i$ in the proof of Proposition~\ref{prop-2} and repeat the argument.
	If $u$ is constant on $\Gamma_i$, then $u_y(0,y)\equiv 0$ on $\Gamma_i$, and Assertion (1) follows from Claim 3.

	If $u$ is not constant on $\Gamma_i$, then $G$ is not empty, and Assertion (2) follows from the same reasoning as in the proof of Proposition~\ref{prop-2}.
\end{proof}

\begin{proof}[Proof of Theorem~\ref{thm-multi-conn}]
	The sufficiency follows directly from Proposition~\ref{prop-cylinder} and Lemma~\ref{lem-conf}.
	Next we focus on establishing the necessity.

	\textbf{(I) Orientable Case.}
	Assume the solvability of \eqref{PP-I} for an orientable surface $\Omega$.
	By Lemma~\ref{lem-1}, Proposition~\ref{prop-3} and Gauss--Bonnet formula, we have
	\begin{align*}
		\int_{\Omega}K &= \int_{\Omega}\Delta f= \int_{\partial \Omega} \frac{\partial f}{\partial \nu}
		=\sum_{i=1}^b\int_{\Gamma_i}\delta_i\sigma_1-\int_{\partial \Omega}\kappa\\
		& =\sum_{i=1}^b\sigma_1 \delta_i L(\Gamma_i)+\int_{\Omega}K-2\pi\chi(\Omega),
	\end{align*}
	where $\delta_i=0$ or $1$, depending on whether $u|_{\Gamma_i}$ is constant or not.
	This leads to
	\begin{equation}\label{eq-3.12}
		\sigma_1 \sum_{i=1}^b \delta_i L(\Gamma_i)=2\pi\chi(\Omega)=2\pi (2-2\gamma-b),
	\end{equation}
	where $\gamma$ denotes the genus of $\Omega$.

	Since the left-hand side of \eqref{eq-3.12} is nonnegative, \eqref{eq-3.12} cannot hold if  $\gamma\ge 1$ or $b\ge 3$.
	Thus, we must have $\gamma=0$ and $b\le 2$.
	We rule out the case $\gamma=0, b=1$ since  $\Omega$ would  be simply connected.

	When $\gamma=0$ and $b=2$, $\Omega$ is topologically an annulus.
	The right-hand side of \eqref{eq-3.12} is equal to $0$, which forces $\delta_1=\delta_2=0$.
	By Proposition~\ref{prop-3}, $u$ must be constant on each $\Gamma_i$, denoted by $c_i$.
	Then we deduce $c_i^2=c^2/\sigma_1^2$ on $\Gamma_i$ from \eqref{eq-s3''} and \eqref{eq-s2'} with $u_y(0,y)$.
	On the other hand, $\int_{\partial\Omega}u=0$ implies $
		c_1L(\Gamma_1)+c_2L(\Gamma_2)=0$,
	hence, $c_1=-c_2$ and $L(\Gamma_1)=L(\Gamma_2)$.

	Now by Proposition~\ref{prop-annulus-homothetic} (see below) and Example~\ref{ex-annulus},
	we obtain that $\Omega$ is $\sigma$-homothetic to $\mathbb{A}_T$ for some $T\ge T_1$.

	\textbf{(II) Non-orientable case.}
	Assume that $(\Omega, g)$ is non-orientable.
	We consider its orientable double cover $\tilde{\Omega}$ with the $2:1$ projection $\pi \colon \tilde{\Omega}\to \Omega$ and the metric $\tilde{g}=\pi^{\ast}g$.
	Let $\tilde{u}=\pi^{\ast}u=u\circ \pi$.
	Clearly, if $u$ has no critical points in $\Omega$,
	then $\tilde{u}$ has no critical points in $\tilde{\Omega}$.
	If $u$ solves \eqref{PP-I} for $\Omega$,
	then $\tilde{u}$ solves \eqref{PP-I} for $\tilde{\Omega}$.
	Although $\sigma_1=\sigma_1(\Omega)$ may not be the first nonzero Steklov eigenvalue of $\tilde{\Omega}$,
	Lemma~\ref{lem-1} and Proposition~\ref{prop-3} are still valid.
	Similar to the proof of Theorem~\ref{thm-multi-conn},
	we obtain $\tilde{\Omega}$ is topologically an annulus, and $\tilde{u}$ takes opposite constant values on the two boundary components. This is a contradiction.

	This completes the whole proof.
\end{proof}

We now prove the following proposition used in the proof of Theorem~\ref{thm-multi-conn}.
\begin{prop}\label{prop-annulus-homothetic}
	Let $\Omega$ be topologically an annulus equipped with a smooth metric $g_\Omega$.
	If $\Omega$ has a Steklov eigenfunction $u$ such that the restriction of $u$ to the two boundary components are constants of opposite sign,
	then there exists a $\sigma$-homothety $\Phi \colon \Omega\to \mathbb{A}_T$ for some $T>0$, and $u$ is $\Phi^{\ast}t$ times a nonzero constant.
\end{prop}

\begin{proof}
	By complex analysis, there exists a conformal diffeomorphism
	\begin{equation*}
		\Phi \colon \Omega \to \mathbb{A}_T
	\end{equation*}
	for some $T>0$.
	Without loss of generality, we assume $\Phi(\Gamma_1)=\Gamma_{-}$ and $\Phi(\Gamma_2)=\Gamma_{+}$.

	By the conformal map $\Phi$, we can parametrize $\Omega$ by $(t,\theta)\in [-T,T]\times S^1$ and write the metric as $g_\Omega=\rho^2(t,\theta)(\mathrm{d}t^2+\mathrm{d}\theta^2)$.

	Suppose $u$ is a Steklov eigenfunction such that $-u|_{\Gamma_1}=u|_{\Gamma_2}=c$, where $c$ is a nonzero constant.
	Then $u$ is the unique solution of the following
	boundary value problem
	\begin{equation*}
	\begin{cases}
		u_{tt}+u_{\theta\theta}=0,\\
		u(-T,\theta)=-c, u(T,\theta)=c.
	\end{cases}
\end{equation*}
	We can directly verify that the linear function $u=ct/T$ is the unique solution.

	Since $u$ is a nonconstant Steklov eigenfunction, we have
	$\frac{\partial u}{\partial\nu}=\sigma u$ on $\partial \Omega$, where $\sigma>0$, $\nu=-\rho^{-1} \partial_t$ on $\Gamma_{1}$, and $\nu=\rho^{-1} \partial_t$ on $\Gamma_{2}$.
	This implies
	\begin{equation*}
		-\rho^{-1}(-T,\theta)\frac{c}{T}=-c\sigma,\quad \rho^{-1}(T,\theta)\frac{c}{T}=c\sigma.
	\end{equation*}
	Hence, $\rho=1/(\sigma T)$ is constant on $\partial\Omega$ since $c\neq 0$,
	which implies $\Phi$ is a $\sigma$-homothety.
\end{proof}

\subsection{Domains in space forms}
In the rest of this section, we investigate the solvability of domains in space forms.
By applying Theorem~\ref{thm-multi-conn}, we prove the following results.

\begin{cor}\label{cor-multi-conn-euc-hyp}
	Any multiply connected compact domain $\Omega\subset \mathbb{R}^2$ or $\Omega\subset\mathbb{H}^2$ with boundary $\partial \Omega$ is unsolvable for $\eqref{PP-I}$.
\end{cor}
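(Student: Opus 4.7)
The plan is to combine Theorem \ref{thm-multi-conn} with the geometric rigidity of space forms. Assume for contradiction that $\Omega\subset \mathbb{M}^2_K$ with $K\in\{0,-1\}$ is multiply connected and solvable for \eqref{OS-I}. By Theorem \ref{thm-multi-conn}, there is a conformal diffeomorphism $\Psi\colon\mathbb{A}_L\to\Omega$ for some $L\ge L_0$ such that, after rescaling $g_\Omega$ appropriately, the pullback takes the form $\Psi^*g_\Omega=\rho^2 g_{\mathbb{A}_L}$ with $\rho\equiv 1$ on $\partial\mathbb{A}_L$. Since $g_\Omega$ has constant Gaussian curvature, the Gauss equation for conformal changes of metric in dimension two forces
\[
\Delta_{\mathbb{A}_L}\log\rho = c\,\rho^2 \quad \text{on }\mathbb{A}_L,\qquad \rho\equiv 1 \text{ on }\partial\mathbb{A}_L,
\]
where $c\ge 0$ is a constant that vanishes iff $K=0$.

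In the Euclidean case ($c=0$), $\log\rho$ is harmonic on $\mathbb{A}_L$ with zero boundary values, so the maximum principle gives $\rho\equiv 1$. Hence $\Psi$ is a genuine isometry and $(\Omega,g_\Omega)$ is isometric (up to the chosen rescaling) to $\mathbb{A}_L$. But $\mathbb{A}_L$ contains the closed geodesic $\{0\}\times S^1$, whereas every geodesic of the Euclidean plane is a non-closed straight line. This contradiction settles the case $\Omega\subset\mathbb{R}^2$.

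In the hyperbolic case ($c>0$), set $\phi=\log\rho$. The problem $\Delta\phi=c\,e^{2\phi}$ with $\phi\equiv 0$ on $\partial\mathbb{A}_L$ is the Euler--Lagrange equation of the strictly convex functional $\int_{\mathbb{A}_L}\bigl(\tfrac12|\nabla\phi|^2+\tfrac{c}{2}e^{2\phi}\bigr)$, so the Dirichlet problem has at most one solution. Since both the equation and the boundary data are invariant under rotations in $\theta$ and under the reflection $t\mapsto -t$, uniqueness forces $\phi=\phi(t)$ with $\phi(-t)=\phi(t)$, so in particular $\phi'(0)=0$. A direct computation of the Christoffel symbols of $\rho^2 g_{\mathbb{A}_L}$ gives $\Gamma^t_{\theta\theta}=-\phi'$, so the circle $\{t=0\}\times S^1$ is a closed geodesic of the pulled-back hyperbolic metric. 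Transporting it via $\Psi$ produces a closed geodesic inside $(\Omega,g_\Omega)\subset\mathbb{H}^2$, contradicting the absence of closed geodesics in the hyperbolic plane.

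I expect the main obstacle to be the hyperbolic case: one must extract enough information from the nonlinear equation $\Delta\phi=c\,e^{2\phi}$ to locate a closed geodesic, whereas the Euclidean case reduces to a one-line maximum-principle argument. The key ingredients in the hyperbolic argument are strict convexity of the variational problem (which supplies uniqueness), the rotational and reflective symmetries of $\mathbb{A}_L$ (which collapse $\phi$ to a function of $t$ alone), and the interpretation of the critical circle $\{\phi'=0\}$ as a closed geodesic via the Christoffel symbols.
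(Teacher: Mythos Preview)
Your argument is correct and shares the paper's overall strategy: invoke Theorem~\ref{thm-multi-conn} to reduce to $\Omega\sim\mathbb{A}_L$, write the conformal factor equation $\Delta_0 w=-K e^{2w}$, and then use uniqueness of the Dirichlet problem together with the rotational/reflective symmetries of $\mathbb{A}_L$ to force $w=w(t)$ even in $t$. The Euclidean case is handled identically (harmonic with constant boundary data $\Rightarrow$ constant).

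The genuine difference is in the hyperbolic endgame. The paper argues that $w=w(t)$ implies each boundary circle has constant geodesic curvature, hence is a hyperbolic circle; since the two circles have equal length they have equal radius, and two distinct hyperbolic circles of the same radius cannot be nested, so they cannot bound a compact annulus. You instead exploit the evenness $w'(0)=0$: the Christoffel computation $\Gamma^t_{\theta\theta}=-w'$ shows that the central circle $\{0\}\times S^1$ is a closed geodesic of the pulled-back metric, contradicting the absence of closed geodesics in any simply connected nonpositively curved space form. Your route is slightly more uniform (the same ``closed geodesic'' contradiction covers both $K=0$ and $K=-1$) and avoids the classification of constant-curvature curves in $\mathbb{H}^2$; the paper's route avoids the Christoffel computation and keeps the discussion on the boundary. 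Your use of strict convexity of $\phi\mapsto\int(\tfrac12|\nabla\phi|^2+\tfrac{c}{2}e^{2\phi})$ for uniqueness is a clean alternative to the paper's direct maximum-principle comparison.
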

\begin{proof}
	Suppose $\Omega\subset \mathbb{M}^2_K$ is solvable for \eqref{PP-I},
	then $\Omega \sim \mathbb{A}_T$ for some $T$ by Theorem~\ref{thm-multi-conn}.
	We write the metric $g$ of $\Omega$ as $g=\rho^2 g_0$ with $g_0=\mathrm{d}t^2+\mathrm{d}\theta^2$,
	then
	\begin{equation}\label{eq-Gauss-PDE}
		e^{-2w}\Delta_0 w= -K,
	\end{equation}
	where $w=\ln \rho$ and $\Delta_0=\frac{\partial^2}{\partial t^2} + \frac{\partial^2}{\partial \theta^2}$ is the Laplacian with respect to $g_0$.

	(1) $\Omega\subset \mathbb{R}^2~(K=0)$:
	The equation \eqref{eq-Gauss-PDE} becomes $\Delta_0 w=0$, i.e., $w$ is harmonic.
	Since $w|_{\partial\Omega}$ is constant, we deduce that both $w$ and $\rho=e^{w}$ are  constant.
	Thus, $\Omega$ is isometric to $\mathbb{A}_T$ after rescaling.
	It is a contradiction, since each connected component of $\partial\mathbb{A}_T$ is a geodesic, but each complete geodesic in $\mathbb{R}^2$ is a straight line and it is non-compact.

	(2) $\Omega\subset \mathbb{H}^2~(K=-1)$:
	The equation \eqref{eq-Gauss-PDE} becomes $\Delta_0 w=e^{2w}$.
	Since $w|_{\partial \Omega}$ is constant,
	this semi-linear elliptic equation has at most one solution by the maximum principle.
	If such a solution exists, it is independent of $\theta$ and is even in $t$,
	i.e., $w=w(t) $ and $w(t)=w(-t)$; so is $\rho=e^w$.
	Consequently, the geodesic curvature of each boundary component is constant.
	Since the boundary components have constant geodesic curvature and equal length,
	both $\Gamma_1$ and $\Gamma_2$ are closed geodesic circles of the same radius.
	However, such circles cannot be nested and do not bound a compact domain.
	Hence, it is impossible that $\Omega\sim \mathbb{A}_T$.

	This completes the proof.
\end{proof}

\begin{cor}\label{cor-multi-conn-sph}
	A multiply connected domain $\Omega\subset\mathbb{S}^2$ with boundary $\partial \Omega$ is solvable for $\eqref{PP-I}$ if and only if $\Omega\sim \sband(R)$ for some $R\in [R_1,\pi/2)$.
	Moreover, $\Omega$ is symmetric with respect to a certain great circle of $\mathbb{S}^2$.
\end{cor}

\begin{proof}
	The metric of $\sband$ is
	\begin{equation*}
	g_1=\mathrm{d}r^2+\cos^2(r)\mathrm{d}\theta^2.
	\end{equation*}
Under the change of variables
\begin{equation*}
	t=\ln \frac{1+\tan(r/2)}{1-\tan(r/2)} \iff r=2\arctan \tanh(t/2),
\end{equation*}
we obtain
\begin{equation*}
	g_1=\frac{1}{\cosh^2(t)}(\mathrm{d}t^2+\mathrm{d}\theta^2).
\end{equation*}
This implies $\sband(R) \sim \mathbb{A}_T$ with $R=2\arctan \tanh(T/2)$.
Moreover, $t=0 \iff r=0$ while $T_1\le T< +\infty \iff R_1\le R<\pi/2$.

Hence, $\Omega$ is solvable for \eqref{PP-I} if and only if $\Omega \sim \mathbb{A}_T\sim \sband(R)$ for some $R\in [R_1,\pi/2)$ by Theorem~\ref{thm-multi-conn}.

When $K=1$, the equation \eqref{eq-Gauss-PDE} becomes $\Delta_0 w=-e^{2w}$.
The boundary condition is $w(T,\cdot)=w(-T,\cdot)=c$.
By using the moving plane method (cf.~the proof of Theorem 2.1 in \cite{GNN79}), we can prove that the solution $w$ of this boundary value problem is symmetric with respect to $t$, and $\frac{\partial w}{\partial t}>0$ for $t\in (-T,0)$ while $\frac{\partial w}{\partial t}<0$ for $t\in (0,T)$.
Therefore, $\rho=e^w$ is also symmetric with respect to $t$ and $\rho_t(0,\cdot)=0$.
It follows that the image of $\{0\}\times S^1$ under a $\sigma$-homothety $\Phi\colon \mathbb{A}_T\to \Omega$ is a closed geodesic of $\mathbb{S}^2$.
Hence, $\Omega$ is symmetric with respect to a certain great circle of $\mathbb{S}^2$.
\end{proof}

\begin{rem}
	When $K=1$, the solution to $\Delta_0 w=-e^{2w}$ with $w(T,\cdot)=w(-T,\cdot)=c$ is not necessarily unique, as a bifurcation phenomenon may occur.
	However, if the solution is independent of $\theta$, then the corresponding $\Omega$ is isometric to $\sband(R)$ for some $R$.
\end{rem}

\section{Further discussion}\label{Sect:general-case}
By examining the arguments in Sects.~\ref{Sect:simply} and \ref{Sect:multiply},
one can see that both Lemma~\ref{lem-1} and Proposition~\ref{prop-3} remain valid when replacing $\sigma_1$ with any nonzero Steklov eigenvalue $\sigma$,
provided that $f=\log|\nabla u|$ is well-defined.
Moreover, the proof of Proposition~\ref{prop-3} requires only that $|\nabla u|$ is constant on each boundary component (possibly taking different values on different components); this is a more natural assumption for surfaces with multiple boundary components.

\subsection{A weaker overdetermined condition}
We replace \eqref{eq-s3} with a weaker condition
\begin{equation}\label{eq-s3'}
	|\nabla u| \text{ is locally constant on } \partial\Omega, \tag{1.3'}
\end{equation}
and consider the following overdetermined problem
\begin{equation}\label{PP-I'}
	\left\{
	\begin{aligned}
		\Delta u&=0 && \text{ in } \Omega,\\
		\partial_{\nu}u&=\sigma_1 u && \text{ on } \partial\Omega,\\
		|\nabla u|&=c \text{ is locally constant} && \text{ on } \partial\Omega.
	\end{aligned}\right.\tag{PP-I'}
\end{equation}

The weaker condition yields a broader class of solvable surfaces than the standard cylinder $\mathbb{A}_T$.
Indeed, for any fixed $\alpha\ge 1$, let $\mathbb{A}_{T,\alpha}$ be the cylinder $[-T,T]\times S^1$ equipped with a rotationally symmetric and conformally flat metric $g_{\alpha}=\rho^2_{\alpha}(t)(\mathrm{d}t^2+\mathrm{d}\theta^2)$, where $\rho_{\alpha}(t)=\frac{\alpha-1}{2T}t+\frac{\alpha+1}{2}$ is linear in $t$ and satisfies $\rho(-T)=1$ and $\rho(T)=\alpha$.
Geometrically, this means $\mathbb{A}_{T,\alpha}$ is a lateral surface of a conical frustum (including boundary) with a boundary length ratio $L(\Gamma_{+})/L(\Gamma_{-})=\alpha$.
It is known that
$\mathbb{A}_{T,\alpha}$ admits the linear function $\ell(t)=\frac{1}{T}t+\frac{1-\alpha}{1+\alpha}$ as a Steklov eigenfunction, with the corresponding eigenvalue $\sigma_{\ell} \coloneqq \frac{1+\alpha}{2\alpha T}$.
Furthermore, there exists a critical value $T_{\alpha}$ such that $\sigma_{\ell}=\sigma_1$ if and only if $T\ge T_{\alpha}$.
Specifically, $T_{\alpha}$ is the unique positive solution of
\begin{equation}\label{eq-critical-eq}
	\frac{4\alpha x}{(\alpha+1)^2}=\coth(2x)+\left(\coth^2(2x)-\frac{4\alpha}{(\alpha+1)^2}\right)^{1/2}.
\end{equation}
In particular, $\mathbb{A}_{T,\alpha}$ reduce to $\mathbb{A}_{T}$ and \eqref{eq-critical-eq} becomes $x=\coth x$ when $\alpha=1$.
Consequently, $T_1$ coincides with the previously established definition.
We refer the reader to \cite{FS11}*{Section3} for more details.

Based on the analysis at the beginning of this section, we establish the following theorem, omitting the proof details for brevity.
\begin{thm}
	Let $(\Omega,g_\Omega)$ be a multiply connected  compact surface.
	Then $\Omega$ is solvable for the overdetermined problem \eqref{PP-I'} if and only if $\Omega\sim \mathbb{A}_{T,\alpha}$ for some $T\ge T_{\alpha}$.
\end{thm}
\begin{rem}
	If $|\nabla u|\equiv c_i$ on the boundary component $\Gamma_i~(i=1,2)$,
	then the boundary length ratio $\alpha=L(\Gamma_{2})/L(\Gamma_{1})$ is given by $c_{1}/c_{2}$,
	which follows from $|\nabla u|=\sigma |u|$ on $\partial \Omega=\Gamma_1\cup \Gamma_2$ and $\int_{\partial \Omega} u=0$.
	To ensure $\alpha\ge 1$, we assume $|c_{1}|\ge |c_{2}|$ without loss of generality.
\end{rem}

By standard conformal maps,
a multiply connected compact domain $\Omega\subset \mathbb{R}^2$ or $\Omega\subset\mathbb{H}^2$ with boundary $\partial \Omega$ is solvable for $\eqref{PP-I'}$ if and only if $\Omega$ is an annulus bounded by two concentric geodesic circles with appropriate inner and outer radii.
This stands in contrast to Corollary~\ref{cor-multi-conn-euc-hyp}.

\subsection{Solvability for higher eigenvalues}
Following the same argument as in the proof of Theorem~\ref{thm-multi-conn},
we immediately obtain the following result.
\begin{prop}
	Let $(\Omega,g_\Omega)$ be a multiply connected  compact surface.
	If the overdetermined problem \eqref{eq-s1}, \eqref{eq-s2} and \eqref{eq-s3'}
	admits a solution $u$ with no critical points in $\Omega$,
	then $\Omega$ is $\sigma$-homothetic to $\mathbb{A}_{T,\alpha}$ for some $T>0$ and $\alpha\ge 1$.
	Moreover, any solution of this overdetermined problem is $\Phi^{\ast}\ell$ times a constant, where $\Phi \colon \Omega\to \mathbb{A}_{T,\alpha}$ is a $\sigma$-homothety.

	Furthermore, if \eqref{eq-s3} replaces \eqref{eq-s3'}, then $\alpha$ must be $1$.
\end{prop}

Suppose $u$ has critical points $p_1, \dots, p_k$ in $\Omega$ with multiplicities $m_1, \dots, m_k$, respectively.
Then $\log|\nabla u|$ is well-defined on $\Omega_{\epsilon}\coloneqq\Omega\setminus \bigcup_{i=1}^{k} B(p_i,\epsilon)$,
where $B(p_i,\epsilon)$ denotes the geodesic ball of radius $\epsilon$ centered at $p_i$.
Here, $\epsilon>0$ is chosen sufficiently small such that the balls are pairwise disjoint and compactly contained in $\Omega$.
Let $\mathbf{n}$ denote the outward unit normal to $\partial B(p_i,\epsilon)$,
then the total flux of the gradient of $\log |\nabla u|$ across the boundary satisfies
\begin{equation}\label{eq-dirac}
	\int_{\partial B(p_i,\epsilon)}\frac{\partial}{\partial \mathbf{n}}\log |\nabla u|=2\pi m_i+\int_{B(p_i,\epsilon)}K.
\end{equation}

\begin{rem}
	The identity \eqref{eq-dirac} can be formally understood via the distributional Laplacian of the log-gradient.
	In the sense of distributions,
	$\Delta \log |\nabla u|=2\pi\sum_{i=1}^k m_i\delta_{p_i}+K$ (cf.~Lemma~\ref{lem-1} for the case without critical points),
	where each critical point $p_i$ contributes a Dirac mass of strength $2\pi m_i$.
	For the sake of rigor, we shall provide a direct computational verification at the end of this section.
\end{rem}

Now by using Lemma~\ref{lem-1}, Proposition~\ref{prop-3} and Eq.~\eqref{eq-dirac}, we have
\begin{align*}
	\int_{\Omega_{\epsilon}} K&=
	\int_{\Omega_{\epsilon}}\Delta \log |\nabla u|=\int_{\partial \Omega_{\epsilon}}  \frac{\partial \log |\nabla u|}{\partial \nu}\\
	&=\int_{\partial \Omega}  \frac{\partial \log |\nabla u|}{\partial \nu}-\sum_{i=1}^k
		\int_{\partial B(p_i,\epsilon)} \frac{\partial \log |\nabla u|}{\partial \mathbf{n}}\\
	&=\sigma\sum_{i=1}^{b}\delta_i L(\Gamma_i)-\int_{\partial \Omega} \kappa-\sum_{i=1}^k\int_{B(p_i,\epsilon)}K-2\pi\sum_{i=1}^k  m_i.
\end{align*}
By applying the Gauss--Bonnet formula again, we derive the following necessary condition for the solvability of the overdetermined problem.
\begin{thm}
	Let $(\Omega,g_\Omega)$ be a connected, orientable, compact surface with $b$ boundary components  $\Gamma_1,\dots, \Gamma_b$.
	Set $\delta_i=0$ (resp. $1$) when $u$ is constant (resp. non-constant) on $\Gamma_i$.
	If the overdetermined problem \eqref{eq-s1}, \eqref{eq-s2} and \eqref{eq-s3} (or \eqref{eq-s3'}) admits a nontrivial solution $u$, then
\begin{equation}\label{eq-general-necessary}
	\sigma\sum_{i=1}^{b}\delta_i L(\Gamma_i)=2\pi\big(\chi(\Omega)+m\big),
\end{equation}
where $m$ is the number of the critical points of $u$ in $\Omega$, counted with multiplicities.
\end{thm}

\begin{rem}
	If $\Omega$ is simply connected, then \eqref{eq-general-necessary} reduces to
	\begin{equation*}
		\sigma L(\Omega)=2\pi (m+1),
	\end{equation*}
	which coincides with the result for planar simply connected domains obtained by Alessandrini and Magnanini \cite{AM94}*{Eq.~(2.1)}.
\end{rem}

We conclude this section with the proof of \eqref{eq-dirac}.
Let $(U, \phi)$ be an isothermal coordinate  chart around $p_i$ such that $B(p_i,\epsilon)\subset U$.
The metric on $U$ is expressed as $g=\phi^{\ast}g_0=\rho^2g_0$, where $g_0=\mathrm{d}x^2+\mathrm{d}y^2$.
Then $\tilde{u}=u\circ\phi^{-1}$ is harmonic with respect to $g_0$.
By introducing the complex parameter $z=x+\mathrm{i}y$,
there exists a holomorphic function $H(z)$ such that $\tilde{u}=\Real H(z)$.

Since $z=0$ is the unique critical point of $\tilde{u}$ in the domain $D_{\epsilon}\coloneqq\phi(B(p_i,\epsilon))$ with multiplicity $m_i$,
it follows that $H'(z)=z^{m_i}h(z)$, where $h(z)$ is a holomorphic function with no zeros in $D_{\epsilon}$.
Noting that $|\nabla^0 \tilde{u}|=|H'(z)|$, we have
\begin{equation}\label{eq-5.3}
	\int_{\gamma_{\epsilon}} \frac{\partial \log |\nabla^0 \tilde{u}|}{\partial \mathbf{n}_0}\,|\mathrm{d}z|=\int_{\gamma_{\epsilon}}\frac{\partial \log |z^{m_i}|}{\partial \mathbf{n}_0}\,|\mathrm{d}z|+\int_{\gamma_{\epsilon}}\frac{\partial \log |h(z)|}{\partial \mathbf{n}_0}\,|\mathrm{d}z|,
\end{equation}
where $\gamma_{\epsilon}\coloneqq\phi(\partial B(p_i,\epsilon))$ is a simple closed curve enclosing the origin,
and $\mathbf{n}_0$ is the outward unit normal to $\gamma_{\epsilon}$ in the plane.
The second integral on the right-hand side of \eqref{eq-5.3} vanishes
due to the harmonicity of $\log |h(z)|$ in $D_{\epsilon}$.

For the first integral on the right-hand side of \eqref{eq-5.3}, by the deformation principle for harmonic functions, we can replace $\gamma_{\epsilon}$ with a circle $\{|z|=r\}$ of sufficiently small radius $r$:
\begin{equation*}
	\int_{\gamma_{\epsilon}}\frac{\partial \log |z^{m_i}|}{\partial \mathbf{n}_0}\,|\mathrm{d}z|
	=m_i\int_{|z|=r}\frac{\partial \log |z|}{\partial r}\,|\mathrm{d}z|=m_i \int_{0}^{2\pi}\Big(\frac{1}{r}\Big)\, r\,\mathrm{d}\theta=2\pi m_i.
\end{equation*}

On the other hand, recalling $\Delta_0 \log \rho=-K \rho^2$ and $\mathrm{d}v_g=\rho^2\, \mathrm{d}v_{g_0}$,
we have
\begin{equation*}
	\int_{\gamma_{\epsilon}}\frac{\partial \log \rho}{\partial \mathbf{n}_0}\,|\mathrm{d}z|=\int_{D_{\epsilon}}\Delta_0(\log \rho)\,\mathrm{d}v_{g_0}=-\int_{B(p_i,\epsilon)}K\,\mathrm{d}v_{g}.
\end{equation*}
Consequently, by the conformal invariance of the two-dimensional flux integral, and we derive
\begin{align*}
	\int_{\partial B(p_i,\epsilon)}\frac{\partial \log |\nabla u|}{\partial \mathbf{n}}\,\mathrm{d}s&=\int_{\gamma_{\epsilon}}\frac{1}{\rho}\frac{\partial \log (\rho^{-1}|\nabla^0 \tilde{u}|)}{\partial \mathbf{n}_0}\,\rho|\mathrm{d}z|\\
	&=\int_{\gamma_{\epsilon}}\frac{\partial \log |\nabla^0 \tilde{u}|}{\partial \mathbf{n}_0}\,|\mathrm{d}z|-\int_{\gamma_{\epsilon}}\frac{\partial \log \rho}{\partial \mathbf{n}_0}\,|\mathrm{d}z|\\
	&=2\pi m_i+\int_{B(p_i,\epsilon)}K\,\mathrm{d}v_{g},
\end{align*}
which is the desired result.

\section{Another overdetermined condition}\label{Sect:proof-thmC}
Payne and Philippin \cite{PP91} also considered another additional constraint for the Steklov problem, that is, every nonconstant Steklov eigenfunction has zero mean value:
\begin{equation}\label{eq-s4}
	\int_{\Omega}u_k=0 \text{ for all $k\ge 1$}.
\end{equation}
For simplicity, we denote the overdetermined problem \eqref{eq-s1}, \eqref{eq-s2} and \eqref{eq-s4} by (PP-II).
Payne and Philippin proved the following rigidity result for  plane domains.
\begin{thm}[\cite{PP91}*{Theorem 2}]\label{thm:PP2}
	Let $\Omega\subset \mathbb{R}^2$ be a connected domain with smooth boundary $\partial \Omega$. Then the overdetermined problem (PP-II) is solvable if and only if $\Omega$ is a round disk.
\end{thm}

In fact, from the proof one can see that Theorem~\ref{thm:PP2} holds for any dimension $n\ge 2$. We extend this theorem to the hemisphere and the hyperbolic space.
\begin{thm}\label{thm-C}
	Let $\Omega\subset\mathbb{S}^n_{+}$ or $\Omega\subset\mathbb{H}^n$ be a connected domain of  with smooth boundary $\partial \Omega$.
	Then the overdetermined problem (PP-II) is solvable if and only if $\Omega$ is a geodesic disk.
\end{thm}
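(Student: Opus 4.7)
The plan is to reduce the overdetermined condition \eqref{eq-s4} to a classical Serrin overdetermined problem on an auxiliary ``torsion'' function, and then to invoke the extension of Serrin's theorem to $\mathbb{S}^2$ and $\mathbb{H}^2$ due to Kumaresan and Prajapat \cite{KP98}.

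Let $v$ denote the unique solution of the Dirichlet problem $\Delta v = 1$ in $\Omega$, $v = 0$ on $\partial\Omega$. For any Steklov eigenfunction $u_k$ with $k \ge 1$, Green's identity combined with $\Delta u_k = 0$ and $v|_{\partial\Omega} = 0$ yields
\begin{equation*}
	\int_{\Omega} u_k \;=\; \int_{\Omega} u_k\, \Delta v \;=\; \int_{\partial\Omega} u_k\, \frac{\partial v}{\partial\nu}.
\end{equation*}
Since the boundary traces $\{u_k|_{\partial\Omega}\}_{k \ge 0}$ form a complete orthogonal basis of $L^2(\partial\Omega)$ with $u_0 \equiv 1$, the condition \eqref{eq-s4} is equivalent to $\partial v/\partial\nu$ being $L^2(\partial\Omega)$-orthogonal to every non-constant basis element, and hence to $\partial v/\partial\nu$ being constant along $\partial\Omega$. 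Setting $\tilde u = -v$, we obtain
\begin{equation*}
	\Delta \tilde u = -1 \text{ in } \Omega, \qquad \tilde u = 0 \text{ on } \partial\Omega, \qquad \frac{\partial \tilde u}{\partial\nu} = \text{const.\ on } \partial\Omega,
\end{equation*}
which is precisely Serrin's system. By the Kumaresan--Prajapat theorem, any simply connected domain in $\mathbb{S}^2_+$ or $\mathbb{H}^2$ supporting such a solution must be a geodesic disk.

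For the converse, I would verify directly that every geodesic disk $D_K(R) \subset \mathbb{M}^2_K$ satisfies \eqref{eq-s4}. Using geodesic polar coordinates $(r,\theta)$ and exploiting rotational symmetry, the non-constant Steklov eigenfunctions separate as $f_k(r)\cos(k\theta)$ or $f_k(r)\sin(k\theta)$ with $k \ge 1$, so that $\int_\Omega u_k = 0$ follows immediately from the $\theta$-integration (the $k=0$ eigenspace is spanned by the constant function $u_0$, which is excluded from \eqref{eq-s4}). The only subtlety to watch — not really a genuine obstacle — is that \eqref{eq-s4} must hold for \emph{every} eigenfunction in each (possibly multi-dimensional) eigenspace; but since any such eigenfunction is a linear combination of the chosen orthogonal basis vectors, the zero-mean property automatically passes from the basis to the whole eigenspace, so the reduction in the second paragraph is valid as stated.
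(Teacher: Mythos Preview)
Your proof is correct and follows essentially the same approach as the paper: both introduce the torsion function ($\Delta v = \pm 1$ with zero Dirichlet data), combine Green's identity with the $L^2(\partial\Omega)$-completeness of the Steklov eigenfunctions to conclude that $\partial v/\partial\nu$ is constant on $\partial\Omega$, and then invoke Kumaresan--Prajapat \cite{KP98}. The only cosmetic difference is that the paper isolates the intermediate mean-value characterization $\tfrac{1}{|\Omega|}\int_\Omega h = \tfrac{1}{|\partial\Omega|}\int_{\partial\Omega} h$ for all harmonic $h$ as a separate lemma (Lemma~\ref{lem-4.1}), whereas you pass directly from \eqref{eq-s4} to constancy of the normal derivative; your route is slightly more streamlined.
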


\begin{rem}
	Because of the example of $\sband(R)$, we cannot replace $\Omega\subset\mathbb{S}^n_{+}$ with $\Omega\subset\mathbb{S}^n$ while keeping other conditions unchanged.
\end{rem}

The proof of Theorem~\ref{thm-C} relies on the following lemma, which extends \cite{PP91}*{Lemma 4} (see also \cites{Ben86, PS89}) from $\mathbb{R}^n$ to $\mathbb{S}^n_{+}$ and $\mathbb{H}^n$, and admits an analogous proof.
\begin{lem}\label{lem-4.1}
	Let $\Omega\subset\mathbb{S}^n_{+}$ or $\Omega\subset\mathbb{H}^n$ be a connected domain with Lipschitz boundary $\partial \Omega$.
	If
	\begin{equation}\label{eq-4.1}
		\frac{1}{|\Omega|}\int_{\Omega}h=\frac{1}{|\partial\Omega|}\int_{\partial\Omega} h
	\end{equation}
	for any harmonic function $h\in C^2(\Omega)$,
	then $\Omega$ is a geodesic disk.
\end{lem}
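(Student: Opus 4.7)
The plan is to translate hypothesis \eqref{eq-4.1} into a Serrin-type overdetermined boundary value problem on $\Omega$, and then invoke the Kumaresan--Prajapat extension of Serrin's symmetry theorem to $\mathbb{S}^2_{+}$ and $\mathbb{H}^2$ (cf.~\cite{KP98}), which characterizes geodesic disks as the only domains admitting a solution of such an overdetermined system.

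I would first introduce the torsion-type auxiliary function $u$ uniquely solving
\begin{equation*}
\Delta u=\frac{1}{|\Omega|}\ \text{in}\ \Omega, \qquad u=0\ \text{on}\ \partial\Omega,
\end{equation*}
whose existence is standard on Lipschitz domains. For any harmonic $h\in C^2(\Omega)\cap C(\bar{\Omega})$, Green's identity combined with $\Delta h=0$ and the Dirichlet condition on $u$ gives
\begin{equation*}
\frac{1}{|\Omega|}\int_{\Omega}h\,dA=\int_{\Omega}h\,\Delta u\,dA=\int_{\partial\Omega}h\,\partial_{\nu}u\,ds.
\end{equation*}
Comparing this with \eqref{eq-4.1} yields
\begin{equation*}
\int_{\partial\Omega}h\Bigl(\partial_{\nu}u-\frac{1}{|\partial\Omega|}\Bigr)ds=0 \qquad\text{for every harmonic } h.
\end{equation*}

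Next, because the Dirichlet problem on $\Omega$ is solvable for arbitrary continuous boundary data, I may take $h$ to be the harmonic extension of $\partial_{\nu}u-1/|\partial\Omega|$ itself (approximating by continuous data if needed). This forces the parenthetical factor to vanish, yielding the overdetermined Neumann condition
\begin{equation*}
\partial_{\nu}u=\frac{1}{|\partial\Omega|} \qquad\text{on}\ \partial\Omega.
\end{equation*}
Thus $u$ satisfies a classical Serrin system on $\mathbb{S}^2_{+}$ or $\mathbb{H}^2$: constant source, zero Dirichlet data, and constant Neumann data. After a rescaling to put the system into the normalization $\Delta u=-1$ used in \cite{KP98}, the Kumaresan--Prajapat rigidity theorem forces $\Omega$ to be a geodesic disk.

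The step I expect to be the main obstacle is verifying the hypotheses of the Serrin-type theorem in the non-Euclidean setting---in particular, upgrading the assumed Lipschitz regularity of $\partial\Omega$ to the smoothness required by the moving-plane method, and checking that the moving-plane argument in space forms applies with the precise normalization produced here. The regularity upgrade should follow by a standard elliptic bootstrap once the constant Neumann data on $\partial\Omega$ is known, since overdetermined boundary conditions typically propagate real-analytic regularity of the free boundary, exactly as in Serrin's original Euclidean argument.
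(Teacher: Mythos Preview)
Your proposal is correct and follows essentially the same route as the paper: introduce the torsion function (the paper normalizes it as $\Delta u=-1$, $u=0$ on $\partial\Omega$, while you use $\Delta u=1/|\Omega|$, a harmless rescaling), apply Green's identity against an arbitrary harmonic $h$ to deduce that $\partial_\nu u$ is constant on $\partial\Omega$, and then invoke the Kumaresan--Prajapat Serrin-type theorem \cite{KP98}. Your additional remarks on upgrading Lipschitz regularity are not addressed in the paper's proof, which simply applies \cite{KP98} directly.
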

\begin{proof}
	Let $u$ be the solution of the Saint-Venant problem in $\Omega$:
	\begin{empheq}[left=\empheqlbrace]{alignat=3}
		\Delta u&=-1 \quad & & \text{ in } \Omega,\label{eq-S1}\\
		u &=0   & & \text{ on } \partial \Omega.\label{eq-S2}
	\end{empheq}
	Then for any harmonic function $h$, we have
	\begin{equation}\label{eq-4.2}
		-\int_{\Omega}h=\int_{\Omega}h\Delta u=\int_{\Omega}u\Delta h+\int_{\partial\Omega}\Big(h\frac{\partial u}{\partial \nu}-u\frac{\partial h}{\partial \nu}\Big)=\int_{\partial\Omega}h\frac{\partial u}{\partial \nu}.
	\end{equation}
	Inserting \eqref{eq-4.2} into \eqref{eq-4.1} yields
	\begin{equation*}
		-\frac{1}{|\Omega|}\int_{\partial\Omega}h\frac{\partial u}{\partial \nu}=\frac{1}{|\partial\Omega|}\int_{\partial\Omega} h,
	\end{equation*}
	equivalently,
	\begin{equation*}
		\int_{\partial\Omega}h\Big(\frac{\partial u}{\partial \nu}+\frac{|\Omega|}{|\partial\Omega|}\Big)=0.
	\end{equation*}
	The arbitrariness of $h$ leads to
	\begin{equation}\label{eq-S3}
		\frac{\partial u}{\partial \nu}=\frac{|\Omega|}{|\partial\Omega|}=\text{constant on $\partial\Omega$},
	\end{equation}
	since we can take a harmonic function $h$ such that $h=\frac{\partial u}{\partial \nu}+\frac{|\Omega|}{|\partial\Omega|}$ on $\partial\Omega$.
	Hence, $u$ solves the overdetermined problem \eqref{eq-S1}, \eqref{eq-S2} and \eqref{eq-S3}.
	By the Serrin-type theorem \cite{KP98} for $\mathbb{S}^2$ and $\mathbb{H}^2$, $\Omega$ must be a geodesic disk.
\end{proof}

\begin{proof}[Proof of Theorem~\ref{thm-C}]
	Let $\{u_i\}_{i\ge 0}$ be a complete orthonormal system of $L^2(\partial \Omega)$ consisting of Steklov eigenfunctions. Then any harmonic function $h\in C^2(\Omega)$ can be represented by
	\begin{equation*}
		h=\sum_{i=0}^{\infty}\left(\int_{\partial\Omega}u_i h\right) u_i.
	\end{equation*}
	Integrating this equation over $\Omega$, we derive
	\begin{equation}\label{eq-4.6}
		\int_{\Omega} h=\left(\int_{\partial\Omega}u_0 h\right) \int_{\Omega} u_0=u_0^2|\Omega|\int_{\partial\Omega}u_0 h,
	\end{equation}
	where we used \eqref{eq-s4} and the fact that $u_0$ is constant.

	On the other hand, since $u_0$ is normalized, we have
	\begin{equation}\label{eq-4.7}
		1=\int_{\partial\Omega}u_0^2=u_0^2|\partial\Omega|.
	\end{equation}
	It follows from \eqref{eq-4.6} and \eqref{eq-4.7} that $h$ satisfies \eqref{eq-4.1}:
	\begin{equation*}
		\frac{1}{|\Omega|}\int_{\Omega} h=\frac{1}{|\partial\Omega|}\int_{\partial\Omega}h.
	\end{equation*}
	By Lemma~\ref{lem-4.1}, $\Omega$ is a geodesic disk.
	\end{proof}

\textbf{Acknowledgment}
	The authors are grateful to Prof.~Changwei Xiong for his helpful suggestions and comments.
	We would also like to thank Prof.~Ailana Fraser and Prof.~Guofang Wei for their interest and useful comments.


\begin{bibdiv}
\begin{biblist}

\bib{AM94}{article}{
      author={Alessandrini, G.},
      author={Magnanini, R.},
       title={Symmetry and nonsymmetry for the overdetermined {S}tekloff eigenvalue problem},
        date={1994},
        ISSN={0044-2275},
     journal={Z. Angew. Math. Phys.},
      volume={45},
      number={1},
       pages={44\ndash 52},
         url={https://doi.org/10.1007/BF00942845},
      review={\MR{1259525}},
}

\bib{AM96}{incollection}{
      author={Alessandrini, G.},
      author={Magnanini, R.},
       title={Symmetry and non-symmetry for the overdetermined {S}tekloff eigenvalue problem. {II}},
        date={1996},
   booktitle={Nonlinear problems in applied mathematics},
   publisher={SIAM, Philadelphia, PA},
       pages={1\ndash 9},
      review={\MR{2410592}},
}

\bib{Ben86}{article}{
      author={Bennett, Allan},
       title={Symmetry in an overdetermined fourth order elliptic boundary value problem},
        date={1986},
        ISSN={0036-1410},
     journal={SIAM J. Math. Anal.},
      volume={17},
      number={6},
       pages={1354\ndash 1358},
         url={https://doi.org/10.1137/0517095},
      review={\MR{860918}},
}

\bib{BS14}{article}{
      author={Binoy},
      author={Santhanam, G.},
       title={Sharp upperbound and a comparison theorem for the first nonzero {S}teklov eigenvalue},
        date={2014},
        ISSN={0970-1249},
     journal={J. Ramanujan Math. Soc.},
      volume={29},
      number={2},
       pages={133\ndash 154},
      review={\MR{3237730}},
}

\bib{CESG11}{article}{
      author={Colbois, Bruno},
      author={El~Soufi, Ahmad},
      author={Girouard, Alexandre},
       title={Isoperimetric control of the {S}teklov spectrum},
        date={2011},
        ISSN={0022-1236},
     journal={J. Funct. Anal.},
      volume={261},
      number={5},
       pages={1384\ndash 1399},
         url={https://doi.org/10.1016/j.jfa.2011.05.006},
      review={\MR{2807105}},
}

\bib{CGGS24}{article}{
      author={Colbois, Bruno},
      author={Girouard, Alexandre},
      author={Gordon, Carolyn},
      author={Sher, David},
       title={Some recent developments on the {S}teklov eigenvalue problem},
        date={2024},
        ISSN={1139-1138},
     journal={Rev. Mat. Complut.},
      volume={37},
      number={1},
       pages={1\ndash 161},
         url={https://doi.org/10.1007/s13163-023-00480-3},
      review={\MR{4695859}},
}

\bib{FS11}{article}{
      author={Fraser, Ailana},
      author={Schoen, Richard},
       title={The first {S}teklov eigenvalue, conformal geometry, and minimal surfaces},
        date={2011},
        ISSN={0001-8708},
     journal={Adv. Math.},
      volume={226},
      number={5},
       pages={4011\ndash 4030},
         url={https://doi.org/10.1016/j.aim.2010.11.007},
      review={\MR{2770439}},
}

\bib{FS16}{article}{
      author={Fraser, Ailana},
      author={Schoen, Richard},
       title={Sharp eigenvalue bounds and minimal surfaces in the ball},
        date={2016},
        ISSN={0020-9910},
     journal={Invent. Math.},
      volume={203},
      number={3},
       pages={823\ndash 890},
         url={https://doi.org/10.1007/s00222-015-0604-x},
      review={\MR{3461367}},
}

\bib{GMY23}{article}{
      author={Gao, Shanze},
      author={Ma, Hui},
      author={Yang, Mingxuan},
       title={Overdetermined problems for fully nonlinear equations with constant {D}irichlet boundary conditions in space forms},
        date={2023},
        ISSN={0944-2669},
     journal={Calc. Var. Partial Differential Equations},
      volume={62},
      number={6},
       pages={Paper No. 183, 19},
         url={https://doi.org/10.1007/s00526-023-02533-3},
      review={\MR{4610261}},
}

\bib{GNN79}{article}{
      author={Gidas, B.},
      author={Ni, Wei~Ming},
      author={Nirenberg, L.},
       title={Symmetry and related properties via the maximum principle},
        date={1979},
        ISSN={0010-3616,1432-0916},
     journal={Comm. Math. Phys.},
      volume={68},
      number={3},
       pages={209\ndash 243},
         url={http://projecteuclid.org/euclid.cmp/1103905359},
      review={\MR{544879}},
}

\bib{GP17}{article}{
      author={Girouard, Alexandre},
      author={Polterovich, Iosif},
       title={Spectral geometry of the {S}teklov problem (survey article)},
        date={2017},
        ISSN={1664-039X},
     journal={J. Spectr. Theory},
      volume={7},
      number={2},
       pages={321\ndash 359},
         url={https://doi.org/10.4171/JST/164},
      review={\MR{3662010}},
}

\bib{GLW25}{article}{
      author={Gu, Pingxin},
      author={Li, Haizhong},
      author={Wan, Yao},
       title={Weinstock inequality in hyperbolic space},
        date={2025},
        ISSN={0022-1236},
     journal={J. Funct. Anal.},
      volume={289},
      number={12},
       pages={Paper No. 111155, 22},
         url={https://doi.org/10.1016/j.jfa.2025.111155},
      review={\MR{4941840}},
}

\bib{GX19}{article}{
      author={Guo, Jinyu},
      author={Xia, Chao},
       title={A partially overdetermined problem in a half ball},
        date={2019},
        ISSN={0944-2669},
     journal={Calc. Var. Partial Differential Equations},
      volume={58},
      number={5},
       pages={Paper No. 160, 15},
         url={https://doi.org/10.1007/s00526-019-1603-3},
      review={\MR{4010636}},
}

\bib{JLXZ24}{article}{
      author={Jia, Xiaohan},
      author={Lu, Zheng},
      author={Xia, Chao},
      author={Zhang, Xuwen},
       title={Rigidity and quantitative stability for partially overdetermined problems and capillary {CMC} hypersurfaces},
        date={2024},
        ISSN={0944-2669},
     journal={Calc. Var. Partial Differential Equations},
      volume={63},
      number={5},
       pages={Paper No. 125, 23},
         url={https://doi.org/10.1007/s00526-024-02733-5},
      review={\MR{4741543}},
}

\bib{KP98}{article}{
      author={Kumaresan, S.},
      author={Prajapat, Jyotshana},
       title={Serrin's result for hyperbolic space and sphere},
        date={1998},
        ISSN={0012-7094},
     journal={Duke Math. J.},
      volume={91},
      number={1},
       pages={17\ndash 28},
         url={https://doi.org/10.1215/S0012-7094-98-09102-5},
      review={\MR{1487977}},
}

\bib{LS25}{article}{
      author={Lee, Eunjoo},
      author={Seo, Keomkyo},
       title={An overdetermined {S}teklov eigenvalue problem on {R}iemannian manifolds with nonnegative {R}icci curvature},
        date={2025},
        ISSN={1422-6383},
     journal={Results Math.},
      volume={80},
      number={4},
       pages={Paper No. 102, 15},
         url={https://doi.org/10.1007/s00025-025-02422-5},
      review={\MR{4905184}},
}

\bib{PP91}{article}{
      author={Payne, L.~E.},
      author={Philippin, G.~A.},
       title={Some overdetermined boundary value problems for harmonic functions},
        date={1991},
        ISSN={0044-2275},
     journal={Z. Angew. Math. Phys.},
      volume={42},
      number={6},
       pages={864\ndash 873},
         url={https://doi.org/10.1007/BF00944568},
      review={\MR{1140698}},
}

\bib{PS89}{article}{
      author={Payne, L.~E.},
      author={Schaefer, Philip~W.},
       title={Duality theorems in some overdetermined boundary value problems},
        date={1989},
        ISSN={0170-4214,1099-1476},
     journal={Math. Methods Appl. Sci.},
      volume={11},
      number={6},
       pages={805\ndash 819},
         url={https://doi.org/10.1002/mma.1670110606},
      review={\MR{1021402}},
}

\bib{Ser71}{article}{
      author={Serrin, James},
       title={A symmetry problem in potential theory},
        date={1971},
        ISSN={0003-9527},
     journal={Arch. Rational Mech. Anal.},
      volume={43},
       pages={304\ndash 318},
         url={https://doi.org/10.1007/BF00250468},
      review={\MR{333220}},
}

\bib{Wei54}{article}{
      author={Weinstock, Robert},
       title={Inequalities for a classical eigenvalue problem},
        date={1954},
        ISSN={1943-5282},
     journal={J. Rational Mech. Anal.},
      volume={3},
       pages={745\ndash 753},
         url={https://doi.org/10.1512/iumj.1954.3.53036},
      review={\MR{64989}},
}

\bib{Xio18}{article}{
      author={Xiong, Changwei},
       title={Comparison of {S}teklov eigenvalues on a domain and {L}aplacian eigenvalues on its boundary in {R}iemannian manifolds},
        date={2018},
        ISSN={0022-1236},
     journal={J. Funct. Anal.},
      volume={275},
      number={12},
       pages={3245\ndash 3258},
         url={https://doi.org/10.1016/j.jfa.2018.09.012},
      review={\MR{3864501}},
}

\bib{Xio22}{article}{
      author={Xiong, Changwei},
       title={On the spectra of three {S}teklov eigenvalue problems on warped product manifolds},
        date={2022},
        ISSN={1050-6926},
     journal={J. Geom. Anal.},
      volume={32},
      number={5},
       pages={Paper No. 153, 35},
         url={https://doi.org/10.1007/s12220-022-00889-0},
      review={\MR{4386421}},
}

\end{biblist}
\end{bibdiv}

\end{document}